\documentclass[11pt]{amsart}
\usepackage{xcolor} 
\usepackage[utf8]{inputenc}
\usepackage{lmodern}
\usepackage{amsmath, amsthm, amssymb, amsfonts,bm}
\usepackage[normalem]{ulem}
\usepackage{hyperref}

\usepackage{verbatim} 
\usepackage{longtable}

\usepackage{mathtools}

\usepackage{tikz}
\usetikzlibrary{decorations.pathmorphing}
\tikzset{snake it/.style={decorate, decoration=snake}}
\usepackage{tikz-cd}
\usetikzlibrary{positioning}

\usepackage{caption}

\usepackage{tikz-cd}
\usetikzlibrary{arrows}

\theoremstyle{plain}

\newtheorem{theorem}{Theorem}[section]

\newtheorem{definition}[theorem]{Definition}

\newtheorem{prop}[theorem]{Proposition}
\theoremstyle{definition}
\newtheorem{exmp}[theorem]{Examples}

\newtheorem{cor}[theorem]{Corollary}
\theoremstyle{remark}
\newtheorem{rem}[theorem]{Remark}
\newtheorem{lem}[theorem]{Lemma}

\DeclareFontFamily{OT1}{rsfs}{}
\DeclareFontShape{OT1}{rsfs}{n}{it}{<-> rsfs10}{}
\DeclareMathAlphabet{\curly}{OT1}{rsfs}{n}{it}

\def\Z{\mathbb{Z}}

\def\C{\mathbb{C}}

\def\P{\mathbb{P}}

\def\O{\mathcal{O}}

\def\a{\alpha}

\def\m{\mu}

\def\m{\mathcal}

\def\Spec{\textrm{Spec}}

\def\dim{\textrm{dim}}

\def\ker{\textrm{ker}}

\def\zm{\mathbb{Z}/m\mathbb{Z}}
\def\wA{\widehat{A}}
\def\wAm{\widehat{A}(m)}

\def\QCoh{\operatorname{QCoh}}
\def\Mod{\operatorname{Mod}}
\def\mod{\operatorname{mod}}
\def\tC{\widetilde{C}}
\def\tY{\widetilde{Y}}

\newsavebox{\leftbox} \newsavebox{\rightbox}%

\NewDocumentCommand{\lrboxbrace}{s O{\{} O{\}} O{0.05\linewidth} m O{0.5\linewidth} m}{
  \begin{lrbox}{\leftbox}
    \IfBooleanTF{#1}
      {\begin{varwidth}{#4}#5\end{varwidth}}
      {\begin{minipage}{#4}#5\end{minipage}}
  \end{lrbox}
  \begin{lrbox}{\rightbox}
    \IfBooleanTF{#1}
      {\begin{varwidth}{#6}#7\end{varwidth}}
      {\begin{minipage}{#6}#7\end{minipage}}
  \end{lrbox}
  \ensuremath{\usebox\leftbox\left\{\,\usebox\rightbox\,\right\}}
}

\usepackage[backend=biber,style=alphabetic,sorting=nyt, giveninits=true,doi=false,isbn=false,maxbibnames=99]{biblatex} 
\usepackage{vmargin}
\setpapersize{A4}
\setmarginsrb{27mm}{12mm}{27mm}{12mm}%
{12mm}{10mm}{5mm}{10mm}

\usepackage{tikz}
\usepackage{lmodern}
\usetikzlibrary{decorations.pathmorphing}

\begin{document}
\title{Spectral correspondence for cyclic Higgs bundles}
\author{JIA CHOON LEE and Ana Pe\'on-Nieto}

\address{Institute of Geometry and Physics, University of Science and Technology of China, 96 Jinzhai Road, Hefei 230026 P.R. China}
\email{jiachoonlee@ustc.edu.cn,jiachoonlee@outlook.com}
\address{Facultad de Ciencias Matemáticas, Universidade de Santiago de Compostela, Rúa de Lope Gómez de Marzoa s/n, 15782 Santiago de Compostela, SPAIN,
      \newline \textit{and}
      \newline\indent School of Mathematics,  University of Birmingham, Watson Building, Edgebaston, Birmingham B15 2TT, UK}
\email{ana.peon@usc.es, a.peon-nieto@bham.ac.uk}


\begin{abstract}
In this paper, we describe the spectral correspondence for cyclic Higgs bundles from the viewpoint of quiver bundles. Under this framework, we establish a one-to-one correspondence between cyclic Higgs bundles on a curve and sheaves on a noncommutative surface whose noncommutative structure originates from the path algebra associated to the cyclic quiver. As applications, this correspondence generalizes the known spectral correspondence for $U(p,p)$-Higgs bundles and establish a connection between the spectral data for $U(p,q)$-Higgs bundles and modules over the sheaf of even Clifford algebras of a conic fibration. 
\end{abstract}

\baselineskip=14.5pt
\maketitle

\setcounter{tocdepth}{1}
\tableofcontents

\section{Introduction}
A cyclic Higgs bundle on a smooth projective curve $C$ is a $GL(r, \C)$-Higgs bundle consisting of a vector bundle $E= \bigoplus_{i\in \zm} E_i$ and a Higgs field $\Phi: E\to E\otimes K_C$ of the form
            \begin{equation}\label{eq:associated bundle}
                \Phi= \begin{pmatrix}
                            0&0&\cdots&0&\phi_{m-1}\\
                            \phi_0&0&&&0\\
                            0&\phi_1&&&0\\
                            \vdots &&\ddots&&\vdots\\
                            0&\cdots&&\phi_{m-2}&0
                \end{pmatrix}
            \end{equation}
where $\phi_i: E_i\to E_{i+1}\otimes K_C$ for $i\in \zm$.

This notion arose originally in the work of Simpson \cite{Simpson-middleconvolution} (called cyclotomic Higgs bundles) and was defined as the fixed points for the action of $\zm$ on the Higgs field. From the perspective of moduli spaces, the moduli space of cyclic Higgs bundles can be viewed as the fixed point locus of an $\zm$-action on the moduli space of $GL(r,\C)$-Higgs bundle. Cyclic Higgs bundles have also been investigated in other contexts. When $m=2$, they are Higgs bundles for real Lie group $U(p,q)$ and the topology of their moduli spaces have been studied extensively using Morse theory by Bradlow, Garc\'{i}a-Prada and Gothen \cite{bradlow-gp-gothen}. Analytically, the work of Baraglia \cite{baraglia} establishes a correspondence between cyclic Higgs bundles and solutions of the affine Toda equation. This is later generalized to non-compact surfaces by Li and Mochizuki \cite{Li-Mochizuki2}. More recently, the concept of cyclic Higgs bundles has been generalized to arbitrary reductive groups through the Lie-theorectic perspective of Vinberg pairs by Garc\'{i}a-Prada and Gonz\'{a}les \cite{gp-vinberg,Garc_a_Prada_2026} (see also \cite{garcia-ramanan}).

In this paper, we are interested in studying the spectral correspondence for cyclic Higgs bundles. The classical spectral correspondence for $GL(r,\C)$-Higgs bundles is  originally established by Hitchin \cite{hitchin-stable}, and subsequently generalized by Beauville--Narasimhan--Ramanan (BNR) \cite{bnr} and Simpson \cite{simpsonmoduli}. Roughly speaking, the original viewpoint of Hitchin converts a Higgs bundle into a line bundle on its corresponding spectral curve (when smooth) by taking a fiberwise eigenspace decomposition. On the other hand, the viewpoint of BNR treats Higgs bundles as modules over the symmetric algebra $S^\bullet K_C^{-1}$ which then lift to torsion-free sheaves on the spectral curve (when integral). Simpson further interprets these modules as pure dimension one coherent sheaves on the surface $X=\underline{\Spec}(S^\bullet K_C^{-1})$. The two approaches of Hitchin and BNR/Simpson parallel an elementary equivalence in linear algebra: a vector space equipped with an endomorphism is equivalent to a module over the polynomial ring $\C[t]$. This can be naturally generalized to an equivalence in the theory of quiver that we will use: a representation of a quiver is equivalent to a module over its path algebra.

A cyclic Higgs bundle can be seen as a quiver bundle for the cyclic quiver $Q(m)$, or equivalently, a representation of $Q(m)$ in the category of vector bundles.
\begin{equation*}
                        Q(m)= 
                    \begin{tikzcd}
              v_0\arrow[r,"a_0"]&v_1\arrow[r,"a_1"]&... \arrow[r]&v_{m-1}\arrow[lll,bend right,"a_{m-1}"]
                    \end{tikzcd}
            \end{equation*}
We will adopt the module-theoretic viewpoint which is more flexible when the spectral curves become singular. This means that we first treat a cyclic Higgs bundle as a module over the sheaf of the associated path algebra $A(m)$ (generalizing the symmetric algebra $S^\bullet K_C^{-1}$) on $C$. This path algebra $A(m)$ is always noncommutative and of infinite rank. By performing a central reduction of $A(m)$, we obtain a sheaf of noncommutative algebras $\wAm$ of finite rank on the surface $Y = \underline{\Spec}(S^\bullet K_C^{-m})$. Then we can lift the $A(m)$-modules on $C$ to $\wAm$-modules on $Y$. By rewriting the $\wAm$-modules in terms of quiver sheaves, we arrive at the following version of spectral correspondence for cyclic Higgs bundles. 
\begin{theorem}[Corollary~\ref{cor:equiv5}]\label{thm:main}
    Fix a dimension vector $\bm{p}=(p_0, \dots, p_{m-1})$ and a degree vector $\bm{d}=(d_0, \dots, d_{m-1})$.
Let $(\tC_0, \dots,\tC_{m-1})\in B(\bm{p})$ be a fixed collection of spectral curves in $Y$. The following two categories are equivalent:
\begin{enumerate}
    \item (Higgs data) The category of cyclic Higgs bundles $(\{E_i\}_{i\in \zm}, \{\phi_i\}_{i\in \zm})$ on $C$ with dimension vector $\bm{p}$ and degree vector $\bm{d}$, such that the spectral curve of  $\Phi_i:=\phi_{i-1} \circ \dots \circ \phi_{i+1} \circ \phi_i $ is $\tC_i$ for all $i\in \zm$.
    
    \item (Spectral data) The category of $M_Y$-twisted $Q(m)$-quiver sheaves $(\{F_i\}_{i\in\zm}, \{\psi_i\}_{i\in\zm})$ on $Y$ satisfying:
    \begin{itemize}
        \item[(a)] For each $i \in \mathbb{Z}/m\mathbb{Z}$, $F_i$ is a pure dimension one sheaf on $Y$ that is scheme-theoretically supported on $\tC_i$ (i.e. an $\O_{\tC_i}$-module) with Euler characteristic $\chi(F_i)= d_i + p_i(1-g)$. 
        \item[(b)] The relation $\psi_{i-1}\circ\cdots \circ \psi_{i} = \operatorname{Id}_{F_i}\otimes_{\O_Y} \lambda $ holds in $\operatorname{Hom}_{\O_Y}(F_i\otimes_{\O_Y} \pi^*K_C^{-m}, F_i )$ for all $i\in \mathbb{Z}/m\mathbb{Z}$. 
    \end{itemize}
\end{enumerate}
\end{theorem}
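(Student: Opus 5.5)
The plan is to chain three equivalences, in the order sketched in the introduction. All of them live over the affine morphism $\pi: Y=\underline{\Spec}(S^\bullet K_C^{-m})\to C$.

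\emph{Step 1: from Higgs data to $A(m)$-modules.} A cyclic Higgs bundle $(\{E_i\},\{\phi_i\})$ is the same thing as a representation of $Q(m)$ in vector bundles, twisted by $K_C$. I will identify such representations with sheaves of left modules over the sheaf of twisted path algebras $A(m)=\bigoplus_{\text{paths}} K_C^{-\ell(\text{path})}$ on $C$. The vertex idempotents $e_i$ split $E=\bigoplus e_iE$, and the arrows $a_i\in e_{i+1}A(m)e_i\cong K_C^{-1}$ act through the maps $\phi_i$. This is the quiver analogue of BNR, and it is formal: one checks that morphisms correspond on both sides.

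\emph{Step 2: central reduction.} The cycles $c_i=a_{i-1}\cdots a_i\in e_iA(m)e_i$ sum to a central element $c=\sum_i c_i$ of $A(m)$, which is a section of $K_C^{-m}$ acting on $A(m)$. Hence $A(m)$ is an algebra over $S^\bullet K_C^{-m}$, and it is finite as a module over it, with generators the paths of length $<m$. Because $\pi$ is affine, $A(m)$-modules that are coherent over $\O_C$ correspond to quasi-coherent modules over the sheaf of algebras $\wAm$ on $Y$ with $\pi_*\wAm=A(m)$. The $\O_C$-coherent ones are exactly the $\wAm$-modules whose support is finite over $C$.

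\emph{Step 3: unpacking $\wAm$-modules as quiver sheaves on $Y$.} Put $F_i=e_i\mathcal F$. Right multiplication by the arrows gives maps $\psi_i:F_i\otimes \pi^*K_C^{-1}\to F_{i+1}$, or $F_i\to F_{i+1}\otimes\pi^*K_C$, i.e.\ an $M_Y$-twisted quiver sheaf. Here I expect $M_Y=\pi^*K_C$, possibly with a shift that the paper fixes. The only relation in $\wAm$ beyond the quiver relations is that each cycle $c_i$ equals the tautological section $\lambda$ of $\pi^*K_C^{m}$. This gives condition (b), and conversely (b) is exactly what is needed for a quiver sheaf to define a $\wAm$-module.

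It remains to match the numerical and support conditions. The $\O_Y$-module structure on $F_i$ is given by the action of $c_i$, so $\pi_*F_i=E_i$ with $\lambda$ acting as $\Phi_i$. Thus $F_i$ is the BNR/Simpson spectral sheaf of the twisted endomorphism $\Phi_i:E_i\to E_i\otimes K_C^m$. By Simpson's spectral correspondence, $E_i$ being locally free of rank $p_i$ corresponds to $F_i$ being pure of dimension one and finite over $C$. Cayley--Hamilton gives that the characteristic polynomial of $\Phi_i$ kills $E_i$, so $F_i$ is an $\O_{\tC_i}$-module exactly when the spectral curve of $\Phi_i$ is $\tC_i$. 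The Euler characteristic is preserved because $\pi$ is affine: $\chi(F_i)=\chi(E_i)=d_i+p_i(1-g)$ by Riemann--Roch.

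The main obstacle is condition (a) in the converse direction. Given sheaves $F_i$ scheme-theoretically supported on the $\tC_i$, we must recover that the characteristic polynomial of $\Phi_i$ is precisely the equation of $\tC_i$, not merely that $\Phi_i$ satisfies it. For this I would use the fitting-support/determinant description: pure sheaves on $\tC_i$ with the right rank over $C$ have Fitting support equal to the divisor $\tC_i\in B(\bm p)$. I would also use that membership in $B(\bm p)$ is the compatibility condition among the $\tC_i$, since $\Phi_{i+1}\phi_i=\phi_i\Phi_i$ forces the characteristic polynomials to agree up to factors of $\lambda$. Both directions are functorial and mutually inverse, so the categories are equivalent.
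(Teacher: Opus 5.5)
Your route is the paper's: quiver bundles $\leftrightarrow$ $A(m)$-modules, then central reduction along the affine map $\pi$, then unpacking into quiver sheaves with relation (b), then BNR/Simpson vertex by vertex. For the central reduction, taking the quasi-coherent algebra on $Y$ attached to the $S^\bullet K_C^{-m}$-algebra $A(m)$ is exactly what the paper's $\pi^*A(m)/I$ together with Proposition~\ref{prop:pushforward} produces.

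The forward direction is fine. Cayley--Hamilton makes $F_i$ an $\O_{\tC_i}$-module, $\pi_*F_i=E_i$ is locally free so $F_i$ is pure, and $\chi$ is preserved.

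\textbf{The gap.} It sits where you located it, in the converse half of (a). The lemma you propose to close it is false, and the converse cannot be proved from (a) as written.

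\emph{The rank is not determined.} Nothing in (a) gives ``the right rank over $C$'', because $\chi$ does not see the rank. Take $p_i=1$ and $\tC_i=\Gamma_a$, the graph of $a\in H^0(C,K_C^m)$. A rank-two bundle on $\Gamma_a\cong C$ of suitable degree satisfies (a), but it pushes forward to a rank-two $E_i$.

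\emph{Rank $p_i$ does not give Fitting support $\tC_i$.} Once $\tC_i$ is not integral, an $\O_{\tC_i}$-module of the right rank can still have smaller Fitting support. For all $j$, take $p_j=2$ and $\tC_j=Y_0\cup\Gamma_a$, cut out by $\lambda(\lambda-a)$ with $a\neq 0$. Let $F_j=\iota_*V_j$, with $V_j$ a rank-two bundle of degree $d_j$ on $Y_0\cong C$, and set all $\psi_j=0$. Then:
\begin{itemize}
\item[(i)] $F_j$ is pure and killed by $\lambda$, hence by $\lambda(\lambda-a)$;
\item[(ii)] $\chi(F_j)=d_j+2(1-g)$;
\item[(iii)] (b) holds because $\lambda|_{Y_0}=0$.
\end{itemize}
Yet $\pi_*$ sends this object to the Higgs bundle with all $\phi_j=0$, whose spectral curves are $2Y_0\neq\tC_j$. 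So $\pi_*$ does not carry category (2) into category (1). Your claim in Step 3 that $F_i$ is an $\O_{\tC_i}$-module ``exactly when'' the spectral curve is $\tC_i$ fails for the same reason.

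\textbf{The fix.} Condition (a) must be read with Fitting support rather than scheme-theoretic support; this is what BNR/Simpson actually provide. The paper's proof passes over this point in one line (``translates directly''), so you have correctly isolated a step that needs the amendment. Under the correspondence, $F_i$ has the presentation
\[ 0\to \pi^*(E_i\otimes K_C^{-m})\xrightarrow{\ \lambda-\pi^*\Phi_i\ }\pi^*E_i\to F_i\to 0, \]
so $\operatorname{Fitt}_0(F_i)=(\det(\lambda-\pi^*\Phi_i))$. Two consequences follow:
\begin{itemize}
\item[(i)] ``$F_i$ pure with Fitting support $\tC_i$'' is equivalent to $\Phi_i$ having characteristic polynomial equal to the equation of $\tC_i$.
\item[(ii)] Comparing the $\lambda$-degrees of the two monic generators forces $\operatorname{rk}E_i=p_i$.
\end{itemize}
With (a) stated this way, your argument is complete.

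\textbf{A smaller point.} $B(\bm p)$ is just the product $B(p_0)\times\cdots\times B(p_{m-1})$. The compatibility of the $\tC_i$ in Remark~\ref{rem:constraints} is a consequence of (b), not a condition you can invoke in the converse.
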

\begin{rem}
As we will establish in Proposition~\ref{prop:equiv3}, the spectral data above is equivalent to an $\wAm$-modules. Therefore, Theorem~\ref{thm:main} can be stated as a one-to-one correspondence between cyclic Higgs bundles on $C$ and coherent sheaves on the noncommutative surface $(Y,\wAm)$. 
\end{rem}
This equivalence can be seen as a natural generalization of the works of BNR/Simpson in the case of $GL(r,\C)$,  Schaposnik \cite{schaposnik14} in the case of $U(p,p)$ (which parallels the approach of Hitchin) and Pe\'{o}n-Nieto \cite{peónnieto2015cameraldatasup1phiggsbundles} in the case of $SU(p+1,p)$. There is also a general approach to abelianization of Higgs bundles for arbitrary quasi-split real groups via cameral cover that holds for regular locus by the work of Pe\'{o}n-Nieto and Garc\'{i}a-Prada \cite{gp-peon-nieto}. In Section~\ref{sec:examples}, we study some cases where a further reduction of the spectral data is posssible. 

A further advantage of working on the noncommutative surface $(Y, \wAm)$ is that it reveals a surprising relation to another noncommutative surface studied in the context of the derived category of a conic fibration. More precisely, we show that the sheaf of algebras $\wAm$ is isomorphic to the sheaf of even Clifford algebras associated to a conic fibration over $Y$ (Proposition~\ref{prop:clifford}). The noncommutative algebraic varieties with the sheaf of even Clifford algebras $\mathcal{C}\ell_0$ as structure sheaf appear originally in the work of Kuznetsov on homological projective duality \cite{kuznetsov-quadric}. For example, for a vector space $W$, one of the main results in loc. cit. is that the homological projective dual of $\P(W)$ in the double Veronese embedding $\P(W)\to \P(S^2W)$ is the noncommutative algebraic variety $(\P(S^2 W), \mathcal{C}\ell_0)$. 

The idea of encoding spectral data in terms of sheaves on surfaces with extra structure has also been explored for other variants of Higgs bundles. 
Notably, the use of Koszul duality and noncommutative geometry on some surfaces to establish spectral correspondence was developed by Katzarkov, Orlov and Pantev for twisted and curved Higgs bundles \cite{KOP}. On the other hand, in the context of parabolic Higgs bundles, one can establish a spectral correspondence in terms of blow-up of ruled surface (see \cite{lee2025relativespectralcorrespondenceparabolic}). Further blow-up patterns are required if one is interested in parabolic Higgs bundles with irregular singularities \cite{Kontsevich-Soibelman,Szab2017TheBG,Diaconescu_2018}. Alternatively, one can encode the parabolic structure in terms of root stacks and orbifold surfaces \cite{chuang}. 

Finally, we conclude the paper in Section~\ref{sec:future} by exploring several future directions and open questions stemming from this spectral correspondence.

\subsection{Conventions}
Throughout this paper, we work over the field of complex numbers $\C$. Moreover, all modules over an algebra are assumed to be right modules unless explicitly stated otherwise. 

\subsection{Acknowledgements}{A.P.N. was funded through the schemes Proyectos de Consolidación Investigadora (grant nº CNS2022-136042), Proyectos de generación del conocimiento (nº PID2023-147785NA-I00) and Proyectos de colaboraci\'on internacional (nº PCI2024-155049-2). She also acknowledges support from the COVID Support Programme from the Strategic Project Office of the University of Birmingham.}

\section{Quiver sheaves and modules over the sheaf of path algebras}
\subsection{Quiver Sheaves}
In this section, we recall the framework of quiver sheaves and modules over the associated sheaf of path algebras, following \cite{CGP}. While we focus entirely in the case of curves, this framework is naturally defined over any algebraic variety. 

Let $Q=(Q_0,Q_1)$ be a fixed quiver where $Q_0$ and $Q_1$ are the sets of vertices and arrows together with the head and tail maps:
\begin{equation*}
    h,t:Q_1\to Q_0.
\end{equation*}
Let $C$ be a smooth projective curve and $M=\{M_a\}_{a\in Q_1}$ be a collection of line bundles on $C$. An $M$-twisted $Q$-quiver sheaf is a pair $(\m E, \Phi)$ consisting of a collection of coherent $\O_C$-modules $\m E= \{E_i\}_{i\in Q_0}$ and a collection of morphisms $\Phi =\{\phi_a \}_{a\in Q_1}$ where
\begin{equation*}
    \phi_a : E_{t(a)} \otimes M_a  \to E_{h(a)}.
\end{equation*}
When $E_i$ are all locally free, we will call it an $M$-twisted $Q$-quiver bundle and denote by $\bm{p} = (\dim(E_i))_{i\in Q_1}$ the dimension vector of the quiver bundle. 

\begin{exmp}
A $GL(r,\C)$-Higgs bundle $(E, \phi: E\to E\otimes K_C )$ on $C$ is an $M$-twisted $Q$-quiver bundle with dimension vector $\bm{p}=(r)$, $M= \{K_C^{-1}\}$, and
\begin{equation*}
    Q =\begin{tikzcd}
        v\arrow[loop]
    \end{tikzcd}
\end{equation*}    
\end{exmp}

\begin{exmp}
A $U(p,q)$-Higgs bundle on $C$ is an $M$-twisted $Q$-quiver bundle with dimension vector $\bm{p}=(p,q)$, $M=\{M_{a_1}=K_C^{-1},M_{a_2}=K_C^{-1}\}$ and
\begin{equation*}
    Q= \begin{tikzcd}
        v_1\arrow[r,bend right
        ,"a_1"]&v_2\arrow[l,bend right,"a_2"]
    \end{tikzcd}
\end{equation*}
\end{exmp}
\begin{exmp}
    A holomorphic chain of length $k$ is an $M$-twisted $Q$-quiver bundle where $M_{a_i}=K_C^{-1}$ for all arrows $a_i\in Q_1$ and
    \begin{equation*}
        Q = \begin{tikzcd}
        v_1\arrow[r,"a_1"] &v_2 \arrow[r,"a_2"]&\dots \arrow[r,"a_{k-1}"]&v_k
        \end{tikzcd}
    \end{equation*}
    They appear as the fixed points of the $\C^*$-action on the moduli space of $GL(r,\C)$-Higgs bundles (corresponding to complex variations of Hodge structure). 
\end{exmp}

            \subsection{Path algebras}
Recall that for each quiver $Q=(Q_0,Q_1)$, a path (of length $m$) is a sequence $a_1\cdots a_k$ of arrows $a_i\in Q_1$ such that $t(a_{i})= h(a_{i+1})$ for $i=1,\dots , k-1$. For each vertex $i\in Q_0$, we define the trivial path $e_i$ (of length 0). 

Let $S= \bigoplus_{i\in Q_0} \O_C\cdot e_i$ be a commutative $\O_C$-algebra generated by the formal symbols $e_i$ with relations 
\[\begin{cases}
    e_i\cdot e_{i'} = e_{i}, \quad \textrm{if }i=i' \\
    e_i\cdot e_{i'} = 0 , \quad \textrm{otherwise}. 
\end{cases}\]
for each $i,i'\in Q_0$.
Moreover, let $\widetilde{M}= \bigoplus _{ a\in Q_1} M_a$ and define a $S$-bimodule structure on it as follows: the left module structure is given by
\[\begin{cases}
    e_i\cdot m = m, \quad \textrm{if }m\in M_a\textrm{ and }i=h(a)\\
    e_i\cdot m=0, \quad \textrm{otherwise}
\end{cases}  \]
for all $i\in Q_0, a\in Q_1, m\in M_a$, and likewise for the right module structure. Following \cite{CGP}, we define the sheaf of $M$-twisted path algebra $A$ of $Q$ as the tensor $S$-algebra of the $S$-bimodule $\widetilde{M}$:
\[ A = \bigoplus _{\ell\geq 0} \widetilde{M} ^{\otimes_S \ell}\]
where we view $\widetilde{M}^{\otimes 0} := S$. 

Equivalently, one can directly define an $\O_C$-algebra structure on 
\[ MQ:= \bigoplus_{\textrm{all paths }p}M_{p} \]
where $M_p = M_{a_1}\otimes ... \otimes M_{a_k}$ and the relation is given by 
\[\begin{cases}
   m_p\cdot m_{p'} = m_{p}\otimes m_{p'}, \quad \textrm{if }t(p) = h(p') \\
    m_p\cdot m_{p'} = 0, \quad \textrm{otherwise }
\end{cases}\]
where $m_p\in M_p,m_{p'}\in M_{p'}$ for all paths $p,p'$. Then it is straightforward to see that the $A\cong MQ$ as $\O_C$-algebra e.g. the algebra $A$ has relations such as $m_a \otimes m_{a'} = m_a \otimes e_{h(a')} m_{a'} = m_a \cdot e_{h(a')} \otimes m_a$ which vanishes when $t(a)\neq h(a')$. The unit element in $A$ admits a decomposition into orthogonal idempotents
\begin{equation}\label{eq:decomposition}
    1_A = \sum_{i\in Q_0}e_i. 
\end{equation} 

Just as the category of quiver representations is equivalent to the category of modules over the associated path algebra, there is an analogue for quiver sheaves. Fix a quiver $Q$, a coherent right module over the associated sheaf of path algebra $A$ (or simply, $A$-modules) is an $\O_C$-module $E$ equipped with a right $A$-module structure given by an $\O_C$-module morphism $E\otimes_{\O_C} A\to E$ satisfying the axioms of right modules over an algebra.  

\begin{prop}\label{prop:equiv1}
There is an equivalence between the category of $M$-twisted $Q$-quiver bundles on $C$ and the category of coherent right $A$-modules $\mod_C\!\text{-}A$.
\end{prop}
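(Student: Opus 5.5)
The plan is to follow the classical equivalence between quiver representations and modules over the path algebra, sheafified over $C$. The main tool is the universal property of the tensor algebra $A=\bigoplus_{\ell\ge0}\widetilde M^{\otimes_S\ell}$. A right $A$-module structure on an $\O_C$-module $E$ is the same thing as two pieces of data:
\begin{itemize}
\item a right $S$-module structure on $E$;
\item an $\O_C$-linear, $S$-balanced morphism $E\otimes_S\widetilde M\to E$, compatible with the $S$-actions on both sides.
\end{itemize}
No further relations are imposed, because $A$ is a free tensor algebra over $S$. So I will first record this universal property, checked locally on $C$, where everything reduces to the corresponding statement for tensor algebras over a commutative ring. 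Then I will translate each of the two pieces of data into quiver-sheaf language.

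The first piece of data is a right $S$-module structure, which is exactly a family of commuting orthogonal idempotent $\O_C$-endomorphisms of $E$ summing to the identity, by \eqref{eq:decomposition}. Hence it is the same as a direct sum decomposition $E=\bigoplus_{i\in Q_0}E_i$ with $E_i:=E\,e_i$. The second piece of data decomposes according to $\widetilde M=\bigoplus_a M_a$ and the rule $m=e_{h(a)}\,m\,e_{t(a)}$ for $m\in M_a$. It therefore amounts to a family of $\O_C$-linear maps, one for each arrow $a$, between $E_{t(a)}\otimes M_a$ and $E_{h(a)}$. Before doing the translation I must fix the direction convention. With the paper's conventions for paths ($t(a_i)=h(a_{i+1})$) and right modules, the natural map is defined on $E\,e_{h(a)}\otimes M_a$. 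So either the module is read with $\phi_a$ running against the path order, or one identifies right $A$-modules with left $A^{op}$-modules, equivalently works with the opposite quiver. I will fix one convention so that the induced maps are exactly $\phi_a\colon E_{t(a)}\otimes M_a\to E_{h(a)}$. I expect this bookkeeping of heads, tails and left/right actions to be the main place where care is needed; no real mathematical obstacle arises.

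With these identifications, the two functors are as follows.
\begin{itemize}
\item From quiver sheaves to modules: $(\{E_i\},\{\phi_a\})\mapsto E=\bigoplus_i E_i$, with the $e_i$ acting as the projections and $M_a$ acting through $\phi_a$ on the $E_{t(a)}$-summand and by zero on the other summands. The action extends uniquely to all of $A$ by the universal property: a path $p=a_1\cdots a_k$ acts by the corresponding composite of the $\phi_{a_j}$.
\item From modules to quiver sheaves: $E\mapsto(\{E\,e_i\},\{\phi_a\})$, with $\phi_a$ the restriction of the action map.
\end{itemize}
On morphisms, an $A$-linear map commutes with the $e_i$, so it decomposes into components $f_i\colon E_i\to E'_i$. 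Commuting with the action of $M_a$ is precisely the commutativity $f_{h(a)}\circ\phi_a=\phi'_a\circ(f_{t(a)}\otimes 1)$. The two composites of the functors are naturally isomorphic to the identities via the canonical isomorphism $E\cong\bigoplus_i E\,e_i$.

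Finally, I will check coherence. Although $A$ has infinite rank, a coherent right $A$-module is by definition coherent as an $\O_C$-module. $E$ is coherent if and only if each of the finitely many summands $E_i$ is coherent, so coherent $A$-modules correspond to quiver sheaves. The same equivalence restricts to an equivalence between quiver bundles and those $A$-modules that are locally free over $\O_C$, since $E$ is locally free exactly when every $E_i$ is.
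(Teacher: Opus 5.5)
Your proposal is correct and follows essentially the same route as the paper, which (deferring details to \cite[Proposition 5.2]{CGP}) splits $E=\bigoplus_i E_i$ using the idempotents in $1_A=\sum_i e_i$, obtains the arrow maps by restricting the action to $E\otimes_{\O_C}\widetilde{M}$, and goes back by letting the generators act through the $\phi_a$. Your extra bookkeeping is justified: with right modules and the paper's conventions the restricted action actually gives maps $E_{h(a)}\otimes M_a\to E_{t(a)}$, i.e.\ representations of the opposite quiver, which is harmless for $Q(m)\cong Q(m)^{\mathrm{op}}$; and coherent $A$-modules correspond to quiver \emph{sheaves}, with quiver bundles matching the $\O_C$-locally free modules, which is the form used later in Theorem~\ref{thm:equiv4}.
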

\begin{proof}
    Given a $M$-twisted $Q$-quiver bundles $(\{E_i\}_{i\in Q_0},\{\phi_a\}_{a\in Q_1})$, we associate to it $E = \bigoplus_{i\in Q_0} E_i$ with a right module structure $E\otimes_{\O_C} A\to E$ induced by the maps $\phi_a$ which are generators of the path algebra $A$. This construction defines a functor in the forward direction. Conversely, given a right $A$-module $\m E$ equipped with an $A$-module structure $\mu_A: \m E\otimes_{\O_C}A\to \m E$, the decomposition of the unit $1_A$ induces a decomposition of $\m E = \oplus _{i\in Q_0}E_i$. Restricting $\mu_A$ to $\m E\otimes_{\O_C} \widetilde{M}$ induces the arrow morphisms $\phi_a: M_a\otimes E_{t(a)} \to E_{h(a)}$ for $a\in Q_1$.  For details, we refer the readers to \cite[Proposition 5.2]{CGP}. 
\end{proof}

\section{Spectral correspondence for cyclic Higgs bundles}
\begin{definition}
                    A cyclic Higgs bundle on $C$ with dimension vector $(p_0,\dots,p_{m-1})$ is a $M$-twisted $Q(m)$-quiver bundle with dimension vector $(p_0,\dots,p_{m-1})$, $M=\{M_{a_i} = K_C^{-1}\}_{i\in \Z/m\Z}$ and 
                    \begin{equation*}
                        Q(m)= 
                    \begin{tikzcd}
              v_0\arrow[r,"a_0"]&v_1\arrow[r,"a_1"]&... \arrow[r]&v_{m-1}\arrow[lll,bend right,"a_{m-1}"]
                    \end{tikzcd}
            \end{equation*}
                \end{definition}
\begin{rem}
Clearly, a cyclic Higgs bundle in the definition is equivalent to a $GL(\sum p_i, \C)$-Higgs bundles whose Higgs field is of the form \ref{eq:associated bundle} stated in the introduction. 
\end{rem}

    Let $A(m)$ be the sheaf of $M$-twisted path algebras on $C$ associated to the quiver $Q(m)$. As $A(m)$ is noncommutative, we begin by studying its central subalgebra $Z(A(m))$ which is commutative. Note that we have the natural direct sum decomposition
                \begin{equation}\label{path decomposition}
                    A(m) = \bigoplus_{ i,j\in Q_0}A_{i\leftarrow j}
                \end{equation}
      where $A_{i\leftarrow j}:= e_iA(m)e_j$ corresponds to paths from $v_j$ to $v_i$. 

    \begin{lem}\label{central1}
    $Z(A(m))\subset \bigoplus_{i\in \zm} A_{i\leftarrow i}\subset A(m)$.
    \end{lem}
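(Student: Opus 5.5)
The plan is to use the orthogonal idempotent decomposition \eqref{eq:decomposition} together with the direct sum decomposition \eqref{path decomposition}. Take a local section $z$ of $Z(A(m))$ over an open set $U\subset C$. Write it as $z=\sum_{i,j} z_{ij}$ with $z_{ij}=e_i z e_j\in A_{i\leftarrow j}$. This expansion is available because $1_A=\sum_i e_i$, so $z=1_A z 1_A=\sum_{i,j}e_ize_j$, and the summands lie in the distinct direct summands $A_{i\leftarrow j}$. The goal is to show that $z_{ij}=0$ whenever $i\neq j$. This gives $z=\sum_i e_ize_i\in\bigoplus_{i\in\zm}A_{i\leftarrow i}$, which is the claimed inclusion.

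The key step is to commute $z$ with the idempotents $e_k$. These are global sections of $S\subset A(m)$, so they restrict to $U$ and centrality applies to them. From $e_k z=z e_k$ we compute both sides using the relations $e_ke_i=\delta_{ki}e_i$. The left side is $e_kz=\sum_j z_{kj}$, and the right side is $ze_k=\sum_i z_{ik}$. Comparing components in the decomposition \eqref{path decomposition}, the $A_{k\leftarrow j}$-component of the left side is $z_{kj}$. The same component of the right side is $z_{kj}$ if $j=k$ and $0$ otherwise. Hence $z_{kj}=0$ for all $j\neq k$. Running over all $k\in\zm$ kills every off-diagonal component.

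I do not expect a genuine obstacle here. The only point requiring care is the sheaf-theoretic bookkeeping. We should interpret $Z(A(m))$ as the subsheaf of sections commuting with all local sections, and we need that the decomposition \eqref{path decomposition} is a direct sum of $\O_C$-submodules, compatible with restriction. This compatibility holds because each $A_{i\leftarrow j}=\bigoplus_{p:v_j\to v_i}M_p$ is a direct sum of the line bundles $M_p$ indexed by paths from $v_j$ to $v_i$. So the component comparison can be done locally, or even stalkwise, without any issue.
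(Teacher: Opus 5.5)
Your proof is correct and follows the paper's argument: you split $z$ into the components $e_ize_j\in A_{i\leftarrow j}$, then compare components in $e_kz=ze_k$, which kills the off-diagonal terms. Your extra remarks on centrality being local and on the decomposition being a direct sum of $\O_C$-submodules only spell out details the paper leaves implicit.
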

    \begin{proof}
        Indeed, if $z\in Z(A(m))$ and we write $z= \sum_{i,j\in Q_0} z_{i\leftarrow j}$ according to the decomposition \ref{path decomposition}, then the commutativity $e_k\cdot z = z\cdot e_k$ implies that 
        \[  \sum_{j\in Q_0} z_{k\leftarrow j} = \sum_{i\in Q_0} z_{i\leftarrow k} \]
        It follows that only $z_{k\leftarrow j}=0$ for $k\neq j$. 
    \end{proof}
For each vertex $i\in \zm$, we define the path (loop) of length $m$ 
\[c_i= a_{i-1}\cdot a_{i-2} \dots a_{i+1}\cdot a_{i}\]
where all subscripts are understood modulo $m$. Clearly, we have 
\[ A_{i\leftarrow i }= \bigoplus_{\ell=0}^\infty M_{c_i^\ell}= \bigoplus_{\ell=0}^\infty M_{c_i}^{\otimes \ell} \]
as all paths starting and ending at the vertex $v_i$ must be a composition of the loop $c_i$. Since $M_{c_i}= K_C^{-m}$, it induces a canonical isomorphism $\iota_i: S^{\bullet}(K_C^{-m}) \xrightarrow{\sim} A_{i\leftarrow i}$. Let
\begin{align}
    \Delta : S^\bullet (K_C^{-m}) &\to  \bigoplus_{i\in \zm} A_{i\leftarrow i}\subset A(m),\label{eq:diagonal} \\
    s &\mapsto \sum_{i\in \zm}\iota_i(s) \nonumber
\end{align}
be the diagonal map and denote its image by $A_\Delta\subset A(m)$. 
\begin{prop}\label{central2}
The central subalgebra $Z(A(m))$ is equal to $A_{\Delta}$. Consequently,
$$ Z(A(m)) \cong S^\bullet(K_C^{-m}) $$
as $\O_C$-algebras. 
\end{prop}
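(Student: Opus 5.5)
We prove the two inclusions $A_\Delta\subseteq Z(A(m))$ and $Z(A(m))\subseteq A_\Delta$, working locally on $C$. Over an open set $U$ where $K_C$ is trivial, choose a trivialization, so that each $M_{a_i}|_U\cong\O_U$. Then $A(m)|_U$ is the ordinary path algebra $\O_U Q(m)$ of the cyclic quiver, with $\O_U$-basis the paths. Both $Z(A(m))$ and $A_\Delta$ are subsheaves, so it suffices to compare their sections over such $U$.

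\textbf{$A_\Delta$ is central.} Since $A(m)$ is generated as an $\O_C$-algebra by $S$ and $\widetilde M$, it is enough to check that $\Delta(s)$ commutes with every $e_k$ and with every local section $m_a\in M_{a_k}$. Write $\sigma=\sum_i \iota_i(s)$ for a local generator $s$ of $K_C^{-m}$; equivalently, locally $\sigma=\sum_i c_i$.
\begin{itemize}
\item Commutation with $e_k$ holds because $\sigma\in\bigoplus_i A_{i\leftarrow i}$, so $e_k\sigma=\iota_k(s)=\sigma e_k$.
\item For the arrow $a_k\colon v_k\to v_{k+1}$, we compute
\[
a_k\cdot\sigma=a_k c_k = a_k a_{k-1}\cdots a_{k+1}a_k=c_{k+1}a_k=\sigma\cdot a_k.
\]
Both products are the same path of length $m+1$ from $v_k$ to $v_{k+1}$.
\end{itemize}
Intrinsically, this is the statement that the canonical identifications $M_{a_k}\otimes M_{c_k}\cong M_{c_{k+1}}\otimes M_{a_k}$ agree. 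Both are the tensor product of the same line bundles $K_C^{-1}$ reordered, which is compatible with how $\iota_k$ and $\iota_{k+1}$ are defined. The case of powers $s\in S^\ell$ then follows by multiplicativity of $\Delta$.

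\textbf{The centre is contained in $A_\Delta$.} By Lemma~\ref{central1}, a local central section has the form $z=\sum_i z_i$ with $z_i\in A_{i\leftarrow i}$, so locally $z_i=\sum_\ell f_{i,\ell}c_i^\ell$ with $f_{i,\ell}\in\O_U$. Now impose $a_k z=z a_k$. The left side is
\[
a_k z_k=\sum_\ell f_{k,\ell}\, a_k c_k^\ell,
\]
and the right side is
\[
z_{k+1}a_k=\sum_\ell f_{k+1,\ell}\, c_{k+1}^\ell a_k .
\]
We have $a_kc_k^\ell=c_{k+1}^\ell a_k$, and these paths are distinct for distinct $\ell$, so they are linearly independent over $\O_U$. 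Hence $f_{k,\ell}=f_{k+1,\ell}$ for all $k$, so all $z_i$ correspond to the same element of $S^\bullet(K_C^{-m})$ under the $\iota_i$, that is, $z\in A_\Delta$.

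\textbf{The isomorphism.} Finally, $\Delta$ is injective since each $\iota_i$ is an isomorphism, and $\Delta$ is an algebra map because the $A_{i\leftarrow i}$ are mutually orthogonal subalgebras. Hence $Z(A(m))=A_\Delta\cong S^\bullet(K_C^{-m})$.

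The only delicate point is making the identification $a_kc_k^\ell=c_{k+1}^\ell a_k$ canonical, i.e.\ independent of trivialization, so that the local argument glues. I would handle this by noting that all maps are built from the symmetric monoidal reordering of copies of $K_C^{-1}$, so the local computations agree on overlaps.
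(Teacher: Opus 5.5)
Your proof is correct and takes essentially the same route as the paper. You use Lemma~\ref{central1} to reduce to diagonal components, impose commutation with the arrow generators $a_k$ to force all $z_i$ to come from a single element of $S^\bullet(K_C^{-m})$ under the $\iota_i$, and check the converse directly. Your local trivialization with comparison of coefficients of the independent paths $a_kc_k^\ell=c_{k+1}^\ell a_k$ just makes the paper's line-bundle argument concrete, and the gluing worry is harmless because membership in the subsheaf $A_\Delta$ is a local condition.
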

\begin{proof}
    Let $z= \sum_{i\in \zm}z_i\in Z(A(m))$ where $z_i\in A_{i\leftarrow i}$ by Lemma~\ref{central1}. It remains to impose that $z$ commutes with all local sections $\alpha_i\in M_{a_i}$. By matching the idempotents (heads and tails), the condition $z\cdot \alpha_i = \alpha_i \cdot z$ implies that 
    \begin{equation}\label{equality}
        z_{i+1}\cdot \alpha_i = \alpha_i\cdot z_{i}
    \end{equation}
    which holds in $A_{i+1\leftarrow i}$. If $z_i\in M_{c_i^\ell}$, then the equality forces $z_{i+1}\in M_{c_{i+1}^\ell}.$ As paths, we have $c_{i+1}^\ell \cdot a_i = a_i \cdot c_{i}^{\ell}$ which induces a canonical isomorphism $M_{c_{i+1}}^{\otimes \ell}\otimes M_{a_i} \cong M_{a_i}\otimes M_{c_i}^{\otimes \ell}$. Since $M_{a_i}$ is a line bundle (locally isomorphic to $\O_C$), the equality \ref{equality} holds for all local sections $\alpha_j$ implies that the local sections $z_{i+1}$ and $z_i$ are identified under the isomorphism $\iota_i \circ \iota_{i+1}^{-1}$. Since this holds for all $i$, all components $z_i$ are the image of a single section $s\in S^\bullet(K_C^{-m})$, so $Z(A(m))\subset A_\Delta$. 

    Conversely, it is straightforward to verify that any diagonal element $s\in A_{\Delta}$ commutes with the idempotents $e_i$ and the arrow generators of $A(m)$ to establish the reverse direction. 
\end{proof}

By Proposition \ref{central2}, we know $A(m)$ is a $S^\bullet(K_C^{-m})$-module. It is then natural to consider the total space of $K_C^m$:
\[\pi: Y :=\underline{\operatorname{Spec}}(S^\bullet (K_C^{-m}))\to C\]
Let 
\[A_Y(m)= \pi^*A(m)\]
be the usual pullback of a coherent module which is also the sheaf of $M_Y$-twisted path algebras on $Y$ of the quiver $Q$ with twisting line bundles $M_Y: = \{M_{Y,a_i}= \pi^* (K_C^{-1})\}_{i\in \zm}$. Consider the two morphisms of sheaves: 
\begin{itemize}
    \item The composition $K_C^{-m} \hookrightarrow S^\bullet K_C^{-m} \xrightarrow{\Delta} A(m)$ is a morphism of $\O_C$-modules. We denote its pullback by $\nu_1 := \pi^*(\Delta|_{K_{C}^{-m}})$. Locally, for a section $s\in K_C^{-m}$, this acts as: 
    \[ \nu_1(\pi^*s) =\pi^* \left(\Delta(s)\right) = \sum_{i\in \zm}\pi^*(\iota_i(s))\] 
    
    \item Let $\lambda: \pi^*K_C^{-m} \to \O_Y$ be the morphism induced by the tautological section which lies in $H^0(Y, \pi^*K_C^m)$. We define
    \[ \nu_2: \pi^* K_C^{-m}\xrightarrow{\lambda} \O_Y \to A_Y(m), \quad z \mapsto \lambda(z)\cdot  1_{A_Y(m)} \]
\end{itemize}
Then we define the two-sided ideal $I$ in $A_Y$ generated by the difference of $\nu_1$ and $\nu_2$:
\begin{align*}
    I:=\left\langle  Im(\nu_1-\nu_2 )\right\rangle  &=\left\langle \pi^*(\Delta (s) )- \lambda(\pi^*(s))\cdot 1_{A_Y(m)} |s\in K_C^{-m}\right\rangle\\
    &= \left\langle \pi^*(\iota_i(s)) - \lambda( \pi^*(s))\cdot e_{Y,i}|i\in \zm, s\in K_C^{-m}\right\rangle
\end{align*}
where the last equality is obtained by the decomposition of the unit $1_{A_Y(m)} = \sum_{i\in \zm}e_{Y,i}$.
We define the sheaf of algebras on $Y$: 
\[ \wAm := A_Y(m)/I.\]
Hence, the idea of the ideal $I$ is to identify the actions corresponding to loops and scalar multiplication by the tautological section $\lambda$.

\begin{prop}\label{prop:finiterank}
    \leavevmode
\begin{enumerate}
    \item As an $\O_Y$-module, $\wAm$ is locally free of finite rank of $m^2$.

    \item Let $Y_0\subset Y$ denote the zero section of $K_C^m$ over $C$. For any point $y\in Y\setminus Y_0$, the fiber $\wAm|_y$ is isomorphic to the matrix algebra $\operatorname{Mat}_{m\times m}(\C)$.
\end{enumerate}
\end{prop}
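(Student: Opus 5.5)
The plan is to work locally on $C$ and exhibit an explicit $\O_Y$-basis of $\wAm$. Over an open $U\subset C$ where $K_C$ is trivial, choose a generator of $K_C^{-1}$; this gives local generators $\alpha_i$ of $M_{a_i}=K_C^{-1}$. Then $A(m)|_U$ is the usual path algebra $\O_U Q(m)$, and $S^\bullet K_C^{-m}|_U\cong \O_U[t]$, with $t$ corresponding to $\alpha^{\otimes m}$. Hence $\pi^{-1}(U)=U\times\mathbb{A}^1$ and $A_Y(m)|_{\pi^{-1}U}=\O_{\pi^{-1}U}\,Q(m)$. The ideal $I$ is generated by the elements $c_i - t\, e_i$ for $i\in\zm$, where $c_i$ is now the loop built from the chosen generators and $t$ is the coordinate given by $\lambda$.

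For (1), the first step is to show that the paths $p_{j\leftarrow i}^{(k)}$ of length $k\in\{0,\dots,m-1\}$ starting at $v_i$ form an $\O_Y$-basis of $\wAm$ locally. There are $m$ such paths for each of the $m$ starting vertices, giving $m^2$ in total, and each one is determined by its source $i$ and length $k$.

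\emph{Spanning.} Every path of length $\ell=qm+k$ with $0\le k<m$ factors as $p^{(k)}\cdot c_i^q$. Modulo $I$ this equals $t^q p^{(k)}$, using that $t$ is central.

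\emph{Freeness.} This is the main obstacle, since one must show that $I$ does not collapse more than expected. I would handle it by noting that $A_Y(m)$ is free over $\O_Y[\,c\,]$, where $c=\sum_i c_i$ is the central element $\Delta(\alpha^m)$. A basis is given by the $m^2$ short paths, because $A(m)$ decomposes as $\bigoplus_{i,k}p^{(k)}_{\cdot\leftarrow i}\,\O[c_i]$. The ideal $I$ is two-sided and generated by the central element $c-t$; indeed $c_i-te_i=e_i(c-t)$ and $c-t=\sum_i(c_i-te_i)$. Therefore $I=(c-t)A_Y(m)$, and
\[
\wAm = A_Y(m)\otimes_{\O_Y[c]}\O_Y[c]/(c-t).
\]
This module is free over $\O_Y$ on the images of the short paths. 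These local bases are compatible with the gluing on overlaps up to units, so $\wAm$ is locally free of rank $m^2$.

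For (2), I would fix $y\in Y\setminus Y_0$, so that $t(y)=\tau\neq 0$. The fiber algebra has basis $p^{(k)}_{j\leftarrow i}$, which I relabel $E_{ji}$ with $j=i+k$. The multiplication is path concatenation, with any completed loop replaced by $\tau$. I would then define the rescaled elements
\[
E'_{ji}=\tau^{-\lfloor\cdot\rfloor}\,p_{j\leftarrow i},
\]
or more directly construct an explicit representation on $\C^m$. Take $\C^m$ with basis $e_0,\dots,e_{m-1}$. The idempotents act as coordinate projections, each $a_i$ sends $e_i\mapsto e_{i+1}$ for $i\neq m-1$, and $a_{m-1}$ sends $e_{m-1}\mapsto \tau e_0$. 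Each loop $c_i$ then acts on $e_i$ as $\tau$, so this gives a well-defined algebra map $\wAm|_y\to\operatorname{Mat}_{m\times m}(\C)$. The image of $p^{(k)}_{\cdot\leftarrow i}$ is a nonzero multiple (a power of $\tau$) of the elementary matrix $E_{i+k,i}$. Since $\tau\neq0$, the map is surjective, and because both sides have dimension $m^2$ by part (1), it is an isomorphism. At $\tau=0$ the same map fails to be surjective, which is consistent with excluding $Y_0$.
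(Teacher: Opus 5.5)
Your proposal is correct, and it only partly follows the paper.

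\textbf{Part (1).} The paper gives the same spanning step you do: every path is a short path times a power of a loop, and loops become $\lambda$. It then infers local freeness of rank $m^2$ simply from the count of short paths. You add a freeness argument: $A_Y(m)$ is free over the polynomial subalgebra $\O_Y[c]$ on the $m^2$ short paths, and $I=(c-t)A_Y(m)$ because $c_i-te_i=e_i(c-t)$. This presents $\wAm$ locally as the base change along $c\mapsto t$, and so it supplies the linear independence that the paper leaves implicit.

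\textbf{Part (2).} Here your route is genuinely different. The paper proves that $\wAm|_y$ is simple by chasing idempotents. A nonzero element of a two-sided ideal $J$ has a nonzero component $e_ixe_j$. Closing it up to a loop gives $\lambda(y)e_j\in J$, and from there every $e_k\in J$. The paper then concludes by Wedderburn--Artin. You instead write down the representation on $\C^m$ in which the arrows act by the cyclic shift, with $\tau$ inserted on $a_{m-1}$. You check that $c$ acts as $\tau\cdot\operatorname{Id}$, so the map descends to the fiber. Surjectivity follows because each short path goes to a nonzero multiple of an elementary matrix.

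\textbf{Comparison.} Both arguments rely on the dimension count from (1): you use it for injectivity, and the paper needs it to fix the matrix size at $m$. Your version produces an explicit isomorphism and shows exactly where $\tau\neq 0$ enters. The paper's version is coordinate-free and requires no choice of trivialization.

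The half-written rescaling $E'_{ji}$ is not needed and can be dropped.
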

\begin{proof}
 (1) The $\O_Y$-algebra $A_Y(m)$ is generated by sections corresponding to all paths. Any path $p$ starting from $v_j$ and ending at $v_i$ can be factored as $p = c_i^\ell \cdot p_{ij}$, where $p_{ij}$ is the unique path of length $< m$ from $v_j$ to $v_i$. Recall that taking the quotient of $A_Y(m)$ by $I$ means that we are identifying the action of the loop $c_i$ with multiplication by the tautological section $\lambda$. Therefore, in the quotient $\wAm=A_Y(m)/I$, the loop $c_i^{\ell}$ just acts as scalar multiplication. It follows that $\wAm$ is generated as an $\O_Y$-module by the sections corresponding to paths of length $<m$. Since the number of paths $p$ of length $<m$ is $m^2$ and all the component $M_{p}$ in $A_Y(m)$ is locally free of rank 1, the quotient $A_Y(m)/I$ is locally free of rank $m^2$.

 (2) Let $y\in Y\setminus Y_0$. By the Wedderburn-Artin theorem, it suffices to check that $\wAm|_{y}$ is a simple $\C$-algebra. Let $J$ be a non-zero two-sided ideal of $\wAm|_{y}$. For simplicity, we will denote by $e_i$ the restriction of the section $e_i$ of $\wAm|_y$ as well. Take any non-zero element $x\in J$. Write $x= \sum_{i,j\in Q_0} x_{i\leftarrow j}$ according to the direct sum decomposition \ref{path decomposition} (which holds for the quotient $\wAm$). Since $x \neq 0$, there must exist a nonzero component $x_{i\leftarrow j} = e_ixe_j$ which also lies in $J$ because $J$ is a two-sided ideal. 
 
 Take a nonzero element $u_{j\leftarrow i}\in \wAm|_y$ corresponding to the path from $v_i$ to $v_j$ (which exists because the component of $\wAm|_y$ for any path of length $<m$ is a 1-dimensional vector space). The product $u_{j\leftarrow i}\cdot x_{i\leftarrow j} $ lies in $e_j\wAm|_ye_j\subset J$ which correpsonds to the loop starting and ending at $v_j$. By the definition of the ideal $I$ of relations, the action corresponding to a loop evaluates to the tautological section $\lambda(y)$ which is non-zero because $y\not \in Y_0$. Thus, $u_{j\leftarrow i}\cdot x_{i\leftarrow j}= b_je_j$ for some non-zero scalar $b_j$ and it follows that $e_j\in J$. 
 
 Once we get $e_j\in J$, we can show $e_k\in J$ for all vertices $v_k$. By choosing a nonzero element $u_{k\leftarrow j}$ and $u_{j\leftarrow k}$ corresponding to the path from $v_j$ to $v_k$ and $v_k$ to $v_j$ respectively. The product $u_{k\leftarrow j}\cdot e_j \cdot u_{j\leftarrow k}\in J$ equals to $b_k e_k$ for some non-zero $b_k$, which implies that $e_k\in J$. Hence, we obtain all the idempotents $e_k$ and the identity element $1 = \sum_{k\in \zm} e_k$. It follows that $J = \wAm|_y$.

\end{proof}

\begin{rem}
We can view the pair $(Y, \wAm)$ as a (midly) noncommutative surface in the sense that the category $\Mod_Y\!\text{-}\wAm$ of quasicoherent right $\wAm$-modules on $Y$ is interpreted as the category $\QCoh(Y, \wAm)$ of coherent sheaves on $(Y, \wAm)$. Note that while the original sheaf of path algebras $A(m)$ is not a coherent $\O_C$-module (being of infinite rank), its central reduction $\wAm$ on $Y$ has finite rank by Proposition \ref{prop:finiterank}. Consequently, the category $\QCoh(Y, \wAm)$ and its derived category are more well-behaved. We refer the readers to \cite[Section 2]{kuznetsov-quadric}\cite[Appendix D]{kuznetsov-hyperplane} for a detailed account of noncommutative algebraic geometry from this point of view. 
\end{rem}

\begin{prop}\label{prop:pushforward}
We have an isomorphism of $\O_C$-algebras:
\[\pi_*\wAm \cong A(m) \]
\end{prop}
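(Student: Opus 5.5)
Since $\pi$ is affine, I would compute $\pi_*\wAm$ from the exact sequence $0\to I\to A_Y(m)\to \wAm\to 0$ of quasicoherent sheaves on $Y$. Pushforward along an affine morphism is exact, so $\pi_*\wAm\cong \pi_*A_Y(m)/\pi_*I$. By the projection formula,
\[\pi_*A_Y(m)=\pi_*\pi^*A(m)\cong A(m)\otimes_{\O_C} S^\bullet(K_C^{-m}).\]
It is an $\O_C$-algebra, with $A(m)$ and $S^\bullet K_C^{-m}$ commuting. Write $t$ for the degree-one generators of the second factor, i.e. sections of $K_C^{-m}$, which correspond to $\lambda$. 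Then $\pi_*I$ is the two-sided ideal generated by the elements $\Delta(s)\otimes 1-1\otimes s$ for $s\in K_C^{-m}$.

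The key step is to show that the composite $\O_C$-algebra map
\[f: A(m)\to A(m)\otimes S^\bullet(K_C^{-m})\to \pi_*\wAm,\qquad a\mapsto a\otimes 1,\]
is an isomorphism. I would build the inverse from the multiplication map $g: A(m)\otimes S^\bullet(K_C^{-m})\to A(m)$, $a\otimes s\mapsto a\cdot\Delta(s)$. This is an algebra homomorphism precisely because $\Delta(S^\bullet K_C^{-m})=Z(A(m))$ is central by Proposition~\ref{central2}. It kills the generators of $\pi_*I$, so it descends to $\bar g:\pi_*\wAm\to A(m)$. Clearly $\bar g\circ f=\operatorname{id}$. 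For $f\circ\bar g=\operatorname{id}$, it suffices to check on generators $a\otimes s$: the class of $a\otimes s$ equals the class of $a\Delta(s)\otimes 1$, since modulo the ideal each factor $1\otimes s_j$ can be replaced by $\Delta(s_j)\otimes1$ when $s=s_1\cdots s_\ell$. Both maps are compatible with the algebra structures, so this gives an isomorphism of $\O_C$-algebras.

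I expect the only delicate point to be the identification of $\pi_*\pi^*A(m)$ with the tensor product when $A(m)$ is not coherent, being of infinite rank. I would handle this by noting that $A(m)=\bigoplus_\ell \widetilde M^{\otimes_S\ell}$ is a direct sum of locally free sheaves of finite rank, so it is quasicoherent. The projection formula $\pi_*\pi^*\mathcal F\cong \mathcal F\otimes\pi_*\O_Y$ holds for quasicoherent $\mathcal F$ and affine $\pi$, and it can be checked on an affine open $U\subset C$ trivializing $K_C$: there everything becomes $A(m)(U)\otimes_{\O(U)}\O(U)[t]$ with ideal generated by $\Delta(t)-t$. On such a $U$ one can also cross-check with Proposition~\ref{prop:finiterank}: $\wAm$ is free over $\O(U)[t]$ on the $m^2$ short paths, and $A(m)(U)$ is free over the central $\O(U)[c]$, $c=\Delta(t)$, on the same basis.
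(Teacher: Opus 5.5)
Your proposal is correct and follows essentially the same route as the paper. Both arguments use exactness of $\pi_*$ for the affine map $\pi$, the projection formula $\pi_*A_Y(m)\cong A(m)\otimes S^\bullet(K_C^{-m})$, and the multiplication map $a\otimes s\mapsto a\Delta(s)$, which is an algebra homomorphism by centrality (Proposition~\ref{central2}) and has kernel $\pi_*I$. The only difference is that the paper identifies this kernel by appeal to ``standard properties of tensor products of algebras'', while you prove it explicitly with the inverse $a\mapsto[a\otimes 1]$ and the congruence $a\otimes s_1\cdots s_\ell\equiv a\Delta(s_1\cdots s_\ell)\otimes 1$, which is a welcome clarification.
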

\begin{proof}
    Since $\pi$ is affine, we have $\pi_*\wAm\cong \pi_*A_Y(m)/\pi_*I$. By the projection formula, we have a natural isomorphism: 
    \[\pi_*A_Y(m) \cong  \pi_*\O_Y\otimes_{\O_C} A(m) \]
    Recall that $Y= \underline{\operatorname{Spec}}(S^\bullet (K_C^{-m}))$. This implies that $\pi_*\O_Y = S^\bullet K_C^{-m}$ which is isomorphic to the central subalgebra $Z(A(m))$ by Proposition~\ref{central2}. 

    Let $m_C:\pi_*\O_Y\otimes_{\O_C}A(m)\to A(m)$ be the natural multiplication map. Since $\pi_*\O_Y$ maps into the center of $A(m)$, this multiplication is compatible with the algebra structure and $m_C$ is a well-defined homomorphism of $\O_C$-algebras. Furthermore, $m_C$ is clearly surjective since $ \pi_*\O_Y$ contains the identity element. It remains to show that the kernel of $m_C$ is the ideal $\pi_*I$.

    We can verify this locally over an affine open subset $U= \Spec(R)\subset C$ over which the line bundle $K_C^{-m}$ is trivial. Let $A_R:=A(m)(U)$. Over $U$, the central subalgebra $Z(A(m))\cong S^\bullet K_C^{-m}$ restricts to $Z_R\cong R[t]$ where $t$ corresponds to a trivializing section of $K_C^{-m}(U)$. The restriction of $\pi_*A_Y(m)$ to $U$ is simply the $R$-algebra $R[t]\otimes_R A_R$. Locally, the multiplication map $m_C$ becomes 
    \[m_R:R[t]\otimes _RA_R\to A_R \]
    defined by $1\otimes a\mapsto a$ and $t\otimes 1\mapsto \Delta(t)$ where $\Delta$ is the diagonal map defined in \ref{eq:diagonal}. 

    By definition, the ideal $\pi_*I$ is generated by elements of the form $\pi^*(\Delta(s)) -\lambda (\pi^*(s))\cdot 1_{A_Y(m)}$. On the local chart $U$, since any section $s|_U$ is represented as $r\cdot t$ for some $r\in R$. So, the ideal $\pi_*I(U)$ is determined by the relation on $t$, which is the element $1\otimes \Delta(t)-t\otimes 1$. By standard properties of tensor products of algebras, the kernel of the $m_R$ is generated exactly by the two-sided ideal $\langle 1\otimes \Delta(t)- t\otimes 1\rangle$. Therefore, $\ker(m_R)=\pi_*I(U)$. Since this holds for all affine open subset, we conclude that $\ker(m_C)=\pi_*I$. 
    
\end{proof}

\begin{cor}\label{cor:equiv2}
There is an equivalence between $\Mod_Y\!\text{-} \wAm$ and $\Mod_C\!\text{-}A(m)$. 
\end{cor}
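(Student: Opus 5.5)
The equivalence will come from the standard dictionary for affine morphisms: since $\pi: Y\to C$ is affine, $\pi_*$ induces an equivalence between quasicoherent $\O_Y$-modules and quasicoherent $\pi_*\O_Y$-modules on $C$, where $\pi_*\O_Y = S^\bullet K_C^{-m}$. The plan is to upgrade this to algebras and then invoke Proposition~\ref{prop:pushforward}. I will first show that $\pi_*$ gives an equivalence between $\Mod_Y\!\text{-}\wAm$ and $\Mod_C\!\text{-}\pi_*\wAm$. A right $\wAm$-module $F$ on $Y$ has structure map $F\otimes_{\O_Y}\wAm\to F$. Pushing forward, and using that for an affine morphism $\pi_*(F\otimes_{\O_Y} G)\cong \pi_*F\otimes_{\pi_*\O_Y}\pi_*G$ for quasicoherent $F,G$, this becomes $\pi_*F\otimes_{\pi_*\O_Y}\pi_*\wAm\to \pi_*F$. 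Composing with the surjection $\pi_*F\otimes_{\O_C}\pi_*\wAm\to \pi_*F\otimes_{\pi_*\O_Y}\pi_*\wAm$ makes $\pi_*F$ a right $\pi_*\wAm$-module on $C$. The module axioms are preserved because everything is functorial.

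Conversely, let $E$ be a right $\pi_*\wAm$-module on $C$. The $\O_Y$-algebra structure of $\wAm$ gives a map $\pi_*\O_Y\to \pi_*\wAm$ landing in the center, since the image of $\O_Y$ is central in $\wAm$. Restricting the action along this map makes $E$ a module over the commutative algebra $\pi_*\O_Y$, so $E=\pi_*\widetilde{E}$ for a unique quasicoherent $\O_Y$-module $\widetilde{E}$. Centrality is what makes the right action $\pi_*\O_Y$-balanced. Hence the action descends to $E\otimes_{\pi_*\O_Y}\pi_*\wAm\to E$, which corresponds under the affine equivalence to $\widetilde{E}\otimes_{\O_Y}\wAm\to \widetilde{E}$. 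These two constructions are mutually quasi-inverse, and on morphisms one uses that $\pi_*$ is fully faithful on quasicoherent sheaves for affine $\pi$. The cleanest way to organize this is locally: over $U=\Spec R$ with $\pi^{-1}(U)=\Spec R[t]$, both sides are modules over the $R[t]$-algebra $\wAm(\pi^{-1}U)$, and the constructions glue.

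Second, Proposition~\ref{prop:pushforward} identifies $\pi_*\wAm\cong A(m)$ as $\O_C$-algebras, so $\Mod_C\!\text{-}\pi_*\wAm\simeq \Mod_C\!\text{-}A(m)$. One point needs care. Under the isomorphism $m_C$ of that proposition, the $\pi_*\O_Y$-structure on $\pi_*\wAm$ corresponds to the central embedding $\Delta: S^\bullet K_C^{-m}\to A(m)$ of Proposition~\ref{central2}. I will check this directly from the definition of $m_C$, which sends $t\otimes 1\mapsto\Delta(t)$. This is what guarantees that an arbitrary $A(m)$-module automatically carries a compatible $S^\bullet K_C^{-m}$-action, namely through the center, and hence lifts to $Y$.

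The main obstacle is this compatibility: making precise that the $\O_Y$-linear structure on a lifted module is forced by the $A(m)$-action through $Z(A(m))$. This is what makes the functor from $A(m)$-modules back to $\wAm$-modules well defined, and it rests on the identification of $\ker(m_C)$ with $\pi_*I$. Everything else is formal affine descent. Finally, coherence is preserved in one direction: since $\wAm$ is finite over $\O_Y$ by Proposition~\ref{prop:finiterank}, a finitely generated $A(m)$-module lifts to a coherent $\wAm$-module. Note, however, that $\pi_*$ of a coherent sheaf on $Y$ need not be coherent on $C$, so the correct statement is at the quasicoherent level, as written in the corollary.
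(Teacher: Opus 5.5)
Your proposal is correct and follows the paper's proof: first the affine-morphism equivalence $\pi_*\colon \Mod_Y\!\text{-}\wAm \simeq \Mod_C\!\text{-}\pi_*\wAm$, then the $\O_C$-algebra isomorphism $\pi_*\wAm\cong A(m)$ of Proposition~\ref{prop:pushforward}. Your treatment of centrality and the balanced tensor product spells out what the paper delegates to the commutative case; note that matching $\Delta$ with the $\pi_*\O_Y$-structure is not needed for the equivalence to exist, only to describe the $\O_Y$-structure of the lifted module concretely, so it is less of an obstacle than you suggest.
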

\begin{proof} Since $\pi:Y\to C$ is an affine morphism, the pushforward functor $\pi_*: \Mod_Y\!\text{-} \wAm \to \Mod_C\!\text{-}\pi_*\wAm$ is an equivalence of categories, just as in the commutative analogue \cite[\href{https://stacks.math.columbia.edu/tag/01SB}{Tag 01SB}]{stacks-project}. Then the result follows by Proposition \ref{prop:pushforward}.  \end{proof}

\begin{prop}\label{prop:equiv3}
There is an equivalence between the category $\mod_Y\!\text{-}\wAm$ of coherent right $\wAm$-modules on $Y$ and the category of $M_Y$-twisted $Q(m)$-quiver sheaves $(\{F_i\}_{i\in \zm}, \{\psi_i\}_{i\in \zm} )$ with the additional condition that for each $i\in \zm$, the composition of arrows around the loop (suppressing the twist by the identity on $\pi^*K_C^{-1}$) yields:
\[\psi_{i-1}\circ \cdots \circ \psi_i = \operatorname{Id}_{F_i}\otimes_{\O_Y} \lambda  \in Hom_{\O_Y}(F_i\otimes_{\O_Y} \pi^*K_C^{-m}, F_i )\]
where the subcript of $\psi_i$ is understood modulo $m$.
\end{prop}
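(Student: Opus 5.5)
The plan is to reduce the statement to Proposition~\ref{prop:equiv1}, applied on $Y$ instead of $C$, and then check that the ideal $I$ imposes exactly the loop relation. As noted in the paper, that framework works over any variety. By construction, $A_Y(m)=\pi^*A(m)$ is the sheaf of $M_Y$-twisted path algebras of $Q(m)$ on $Y$. Since $\wAm = A_Y(m)/I$, a right $\wAm$-module is the same thing as a right $A_Y(m)$-module $F$ on which $I$ acts by zero. Because $I$ is a two-sided ideal generated by $\operatorname{Im}(\nu_1-\nu_2)$, it is enough to require that the generators act by zero, i.e. $F\cdot \operatorname{Im}(\nu_1-\nu_2)=0$. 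Indeed, the annihilator of $F$ is a two-sided ideal, so it contains $I$ as soon as it contains the generators.

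First, I will run the argument of Proposition~\ref{prop:equiv1} on $Y$ with coherent sheaves in place of bundles. The decomposition $1=\sum_i e_{Y,i}$ splits a coherent right $A_Y(m)$-module as $F=\bigoplus_i F_i$ with $F_i = F e_{Y,i}$. Restricting the action to $M_{Y,a_i}$ gives the arrow maps $\psi_i\colon F_i\otimes \pi^*K_C^{-1}\to F_{i+1}$. Conversely, such quiver data determine the action of all paths by composing the arrow maps. This gives an equivalence between coherent right $A_Y(m)$-modules and $M_Y$-twisted $Q(m)$-quiver sheaves, and it is compatible with morphisms. Local freeness played no role in that proof.

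Second, I will translate the condition $F\cdot I=0$. It suffices to use the second description of the generators, $\pi^*(\iota_i(s))-\lambda(\pi^*s)e_{Y,i}$ for $i\in\zm$ and $s$ a local section of $K_C^{-m}$. By definition of the module structure, the section $\pi^*\iota_i(s)$ of $M_{Y,c_i}$ acts on $F$ through $F_i$ by the composite $\psi_{i-1}\circ\cdots\circ\psi_i$, evaluated on $s$ via the identification $M_{c_i}=K_C^{-m}$. It kills $F_j$ for $j\neq i$. Likewise, $\lambda(\pi^*s)e_{Y,i}$ acts on $F_i$ by multiplication by $\lambda(\pi^*s)$ and kills the other summands. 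Hence the generators act by zero exactly when, for every $i$, the composite $\psi_{i-1}\circ\cdots\circ\psi_i\colon F_i\otimes\pi^*K_C^{-m}\to F_i$ equals $\operatorname{Id}_{F_i}\otimes\lambda$. It is enough to test this on local sections, since $\pi^*K_C^{-m}$ is locally generated by pulled-back sections.

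Finally, morphisms match on both sides. A morphism of $\wAm$-modules is a morphism of $A_Y(m)$-modules between modules killed by $I$. On the quiver side, the loop relations are conditions on objects only, so the full subcategories correspond. The main obstacle is bookkeeping rather than substance. I need to check that the identification $M_{c_i}\cong K_C^{-m}$ used to define $\iota_i$ agrees with the order of composition of the $\psi$'s under the right-module convention, including the twist and the order of the tensor factors. I also need to check that "generated as a two-sided ideal" only requires checking the generators. I would verify the first point in a local trivialization of $K_C^{-1}$, where everything reduces to a module over the path algebra of $Q(m)$ with the loops set equal to a scalar.
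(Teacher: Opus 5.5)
Your proposal is correct and follows the same route as the paper: you view $\wAm$-modules as $A_Y(m)$-modules annihilated by $I$, apply Proposition~\ref{prop:equiv1} on $Y$, and observe that the generators $\pi^*\iota_i(s)-\lambda(\pi^*s)\,e_{Y,i}$ act on $F_i$ by $\psi_{i-1}\circ\cdots\circ\psi_i-\operatorname{Id}_{F_i}\otimes\lambda$ and kill the other summands. Your additional remarks make explicit points the paper leaves tacit: the annihilator is a two-sided ideal, so checking generators suffices, and local freeness is never used in Proposition~\ref{prop:equiv1}.
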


\begin{proof}
    The category $\mod_Y\!\text{-}\wAm$ is a full subcategory of $\mod_Y\!\text{-}A_Y(m)$ consisting of $A_Y(m)$-modules that annihilates the ideal $I$. By Proposition \ref{prop:equiv1}, an object $\m F\in \mod_Y\!\text{-}A_Y(m)$ corresponds to a $M_Y$-twisted $Q(m)$-quiver sheaf $(\{F_i\}_{i\in \zm}, \{\psi_i\}_{i\in \zm})$. It remains to determine what condition is imposed on this quiver sheaf by requiring it to annihilate $I$. 

    Recall that $I=Im(\nu_1-\nu_2)$. 
    The map $\nu_1: \pi^*K_C^{-m}\to A_Y(m)$ corresponding to loops in the path algebra induces
    \begin{align*}
    \m  F\otimes_{\O_Y} \pi^*K_C^{-m} &\to \m F\otimes_{\O_Y} A_Y(m) \to \m F
    \end{align*}
    By restricting this map to the component $F_i$, we get
    \[\psi_{i-1}\circ \cdots \circ \psi_i : F_i\otimes \pi^*K_C^{-m} \to F_i\]
    On the other hand, the map $\nu_2: \pi^*K_C^{-m}\to A_Y(m)$ is defined by multiplication by the tautological section $\lambda\in H^0(Y, \pi^*K_C^m)$. 
    Its induced action on each component $F_i$ maps a section $f\otimes z\in F_i\otimes_{\O_Y}\pi^*K_C^{-m}$ to $f\cdot \lambda(z)\in F_i$, that is, 
    \[\operatorname{Id}_{F_i}\otimes _{\O_Y}\lambda: F_i\otimes \pi^*K_C^{-m} \to F_i.\]
    Therefore, in order for $\m F$ to annihilates the ideal $I$, each component $F_i$ must satisfy
    \[\psi_{i-1}\circ \cdots \circ \psi_i = \operatorname{Id}_{F_i}\otimes_{\O_Y} \lambda  ,\quad \textrm{for } i\in \zm\]
    as desired.  
\end{proof}

\begin{theorem}\label{thm:equiv4}
The following four categories are equivalent:
\begin{enumerate}
    \item The category of $M$-twisted $Q(m)$-quiver sheaves $(\{E_i\}_{i\in\zm}, \{\phi_i\}_{i\in\zm})$ on $C$.
    
    \item The category of coherent right $A(m)$-modules on $C$.
    
    \item The category of coherent right $\wAm$-modules on $Y$ whose support is finite over $C$.
    
    \item The category of $M_Y$-twisted $Q(m)$-quiver sheaves $(\{F_i\}_{i\in\zm}, \{\psi_i\}_{i\in\zm})$ on $Y$ such that:
    \begin{itemize}
        \item[(a)] The sheaf $F_i$ has support finite over $C$ for all $i\in \zm$. 
        \item[(b)] The relation $\psi_{i-1}\circ \cdots \circ \psi_i = \operatorname{Id}_{F_i}\otimes_{\O_Y} \lambda  $ holds in $\operatorname{Hom}_{\O_Y}(F_i\otimes_{\O_Y} \pi^*K_C^{-m}, F_i )$ for all $i\in \mathbb{Z}/m\mathbb{Z}$.
    \end{itemize}
\end{enumerate}
\end{theorem}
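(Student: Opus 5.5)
The plan is to chain together the equivalences already established, and to track how coherence behaves under $\pi_*$. The equivalence (1)$\Leftrightarrow$(2) is Proposition~\ref{prop:equiv1} for the quiver $Q(m)$ with twisting $M=\{K_C^{-1}\}$. Its proof never uses local freeness, so it applies verbatim to quiver sheaves and coherent right $A(m)$-modules. For (2)$\Leftrightarrow$(3) I would restrict the equivalence $\pi_*:\Mod_Y\!\text{-}\wAm\to\Mod_C\!\text{-}A(m)$ of Corollary~\ref{cor:equiv2}, which comes from Proposition~\ref{prop:pushforward}. The task there is to identify exactly which quasicoherent objects correspond to each other. For (3)$\Leftrightarrow$(4) I would restrict the equivalence of Proposition~\ref{prop:equiv3}, noting that under $\m F=\bigoplus_i F_i$ (via the idempotents $e_{Y,i}$) we have $\operatorname{Supp}\m F=\bigcup_i \operatorname{Supp}F_i$. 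So $\m F$ has support finite over $C$ if and only if each $F_i$ does, which is condition (a); condition (b) is precisely the relation in Proposition~\ref{prop:equiv3}.

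The main step, and the expected obstacle, is showing that for a quasicoherent right $\wAm$-module $\m F$ on $Y$, $\pi_*\m F$ is coherent over $\O_C$ if and only if $\m F$ is coherent over $\O_Y$ with support finite over $C$. Coherence of a right $\wAm$-module is measured as an $\O_Y$-module, equivalently as a module over $\O_C$, since $\wAm$ is finite over $\O_Y$ by Proposition~\ref{prop:finiterank}.

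I would prove this in two directions. First, suppose $\m F$ is $\O_Y$-coherent with support $Z$ finite over $C$. Then $\m F$ is a module over $\O_{Z'}$ for some closed subscheme structure $Z'$ on $Z$, given by a power of the ideal of $Z$. Since $Z'\to C$ is finite, $\pi_*\m F$ is coherent on $C$. Conversely, suppose $\pi_*\m F$ is $\O_C$-coherent. Work over an affine $U=\Spec R$ trivializing $K_C^{-m}$, so that $\pi^{-1}U=\Spec R[t]$. The module $N=\Gamma(U,\pi_*\m F)$ is finitely generated over $R$, hence over $R[t]$, so $\m F$ is coherent. Moreover, $t$ acts as an $R$-linear endomorphism of a finite $R$-module, so by Cayley--Hamilton (the determinant trick) $t$ satisfies a monic polynomial $f\in R[t]$ that annihilates $N$. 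Therefore $\operatorname{Supp}\m F\subset V(f)$, which is finite over $U$.

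These local conditions glue, and the $A(m)$-module structure on $\pi_*\m F$ is exactly the $\wAm$-module structure transported through the algebra isomorphism $\pi_*\wAm\cong A(m)$. It follows that coherent $A(m)$-modules correspond precisely to the objects in (3). In the other direction, given a coherent $A(m)$-module $E$ on $C$, the corresponding $\wAm$-module is the quasicoherent sheaf on $Y$ associated to $E$ regarded as an $S^\bullet K_C^{-m}\cong Z(A(m))$-module. The argument above then shows it lies in (3), which completes the essential surjectivity. Full faithfulness is inherited from Corollary~\ref{cor:equiv2} and Proposition~\ref{prop:equiv3}, since all the categories involved are full subcategories.
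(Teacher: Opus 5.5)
Your proposal is correct and follows the paper's proof: it uses the same chain through Proposition~\ref{prop:equiv1}, Corollary~\ref{cor:equiv2} and Proposition~\ref{prop:equiv3}, and your determinant-trick argument supplies the justification, only asserted in the paper, that $\pi_*\m F$ is $\O_C$-coherent exactly when $\m F$ is $\O_Y$-coherent with support finite over $C$. One wording slip: $\O_Y$-coherence of a $\wAm$-module is equivalent to coherence over $\wAm$ (since $\wAm$ is finite over $\O_Y$), not to coherence over $\O_C$, which is exactly what needs the finite-support condition, as your own argument shows.
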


\begin{proof}
The equivalence (1) $\Leftrightarrow$ (2) is Proposition \ref{prop:equiv1}. The equivalence (2)$ \Leftrightarrow$ (3) follows from the Corollary \ref{cor:equiv2}: under the equivalence $\pi_*$, the $\mathcal{O}_C$-coherence condition for an $A(m)$-module translates precisely to the finite support condition for an $\wAm$-module on $Y$. The equivalence (3) $\Leftrightarrow$ (4) is a restriction of Proposition~\ref{prop:equiv3} to $\mathcal{O}_Y$-coherent sheaves with support finite over $C$.
\end{proof}

Let $(\mathcal{E}, \Phi) =(\{E_i\}_{i\in \zm}, \{\phi_i\}_{i\in \zm})$ be a cyclic Higgs bundle. For each $i \in \mathbb{Z}/m\mathbb{Z}$, the composite map for the loop at $v_i$, 
$$ \Phi_i := \phi_{i-1} \circ \dots \circ \phi_i : E_i \otimes K_C^{-m} \to E_i $$
is an $\mathcal{O}_C$-linear map which defines a spectral curve $\tC_i =\{\det(\lambda Id - \pi^*\Phi_i)=0\}  \subset Y$ where $\lambda\in H^0(Y, \pi^*K_C^{-m})$ is the tautological section. The restriction $\pi|_{\tC_i}:\tC_i\to C$ is a $p_i$-sheeted cover of $C$, where $p_i = \operatorname{rank}(E_i)$. The space of spectral curves is parameterized by the coefficients of the characteristic polynomials which lie in the vector space
\[ B(p_i)= \bigoplus_{\mu=1}^{p_i} H^0(C, K_C^{m\mu} )\] 
By a slight abuse of notation, we will write $\tC_i \in B(p_i)$ to denote the spectral curve $\tC_i$ corresponding to that point in the parameter space $B(p_i)$. For a given dimension vector $\bm{p}=(p_0,\dots, p_{m-1})$, let $B(\bm{p})= B(p_0)\times \cdots \times B(p_{m-1})$. 
\begin{cor}\label{cor:equiv5}
    Fix a dimension vector $\bm{p}=(p_0, \dots, p_{m-1})$ and a degree vector $\bm{d}=(d_0, \dots, d_{m-1})$.
Let $(\tC_0, \dots,\tC_{m-1})\in B(\bm{p})$ be a fixed collection of spectral curves in $Y$. The following two categories are equivalent:
\begin{enumerate}
    \item (Higgs data) The category of cyclic Higgs bundles $(\{E_i\}_{i\in \zm}, \{\phi_i\}_{i\in \zm})$ on $C$ with dimension vector $\bm{p}$ and degree vector $\bm{d}$, such that the spectral curve of  $\Phi_i$ is $\tC_i$ for all $i\in \zm$.
    
    \item (Spectral data) The category of $M_Y$-twisted $Q(m)$-quiver sheaves $(\{F_i\}_{i\in\zm}, \{\psi_i\}_{i\in\zm})$ on $Y$ satisfying:
    \begin{itemize}
        \item[(a)] For each $i \in \mathbb{Z}/m\mathbb{Z}$, $F_i$ is a pure dimension one sheaf on $Y$ that is scheme-theoretically supported on $\tC_i$ (i.e. an $\O_{\tC_i}$-module) with Euler characteristic $\chi(F_i)= d_i + p_i(1-g)$. 
        \item[(b)] The relation $\psi_{i-1}\circ\cdots \circ \psi_{i} = \operatorname{Id}_{F_i}\otimes_{\O_Y} \lambda $ holds in $\operatorname{Hom}_{\O_Y}(F_i\otimes_{\O_Y} \pi^*K_C^{-m}, F_i )$ for all $i\in \mathbb{Z}/m\mathbb{Z}$. 
    \end{itemize}
\end{enumerate}
\end{cor}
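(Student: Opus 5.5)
The plan is to restrict the equivalence of Theorem~\ref{thm:equiv4} between (1) and (4) to the subcategories in question. Under that equivalence, a cyclic Higgs bundle $(\{E_i\},\{\phi_i\})$ corresponds to the quiver sheaf $(\{F_i\},\{\psi_i\})$ on $Y$ with $\pi_*F_i = E_i$. The $\pi_*S^\bullet K_C^{-m}=\pi_*\O_Y$-module structure on $E_i$ is given exactly by the loop $\Phi_i$; this is the identification of $A_{i\leftarrow i}$ with $S^\bullet K_C^{-m}$ through $\iota_i$, together with relation (b). Hence each $F_i$ is the BNR/Simpson lift of the twisted Higgs pair $(E_i,\Phi_i)$ with values in $K_C^m$. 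Condition (b) is common to both sides, so it remains to match three pieces of data: local freeness with purity, the spectral curve condition with scheme-theoretic support on $\tC_i$, and degree with Euler characteristic.

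\emph{Purity.} Since $\pi|_{\mathrm{supp}F_i}$ is finite, $\pi_*$ is exact on such sheaves and preserves subsheaves. A subsheaf of $F_i$ supported in dimension zero pushes forward to a torsion subsheaf of $E_i$, and conversely a torsion subsheaf of $E_i$ is preserved by $\Phi_i$ because $\Phi_i$ is $\O_C$-linear. So $E_i$ is locally free, and hence torsion free on the smooth curve $C$, if and only if $F_i$ is pure of dimension one.

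\emph{Support.} By Cayley--Hamilton, $\det(\lambda\,\mathrm{Id}-\pi^*\Phi_i)$ annihilates $F_i$, because on $\pi_*F_i=E_i$ it acts as the characteristic polynomial of $\Phi_i$ evaluated at $\Phi_i$, which is zero. So if $\Phi_i$ has spectral curve $\tC_i$, then $F_i$ is an $\O_{\tC_i}$-module. The converse is the step I expect to be the main obstacle. Suppose $F_i$ is a pure $\O_{\tC_i}$-module with $\pi_*F_i$ locally free of rank $p_i$. We need the characteristic polynomial of $\Phi_i$ to be exactly the equation of $\tC_i$, and not merely a polynomial that annihilates $F_i$. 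I would argue this via the Fitting support. For a pure sheaf $F$ on $Y$ whose pushforward is locally free, the zeroth Fitting ideal of $F$ is generated by $\det(\lambda-\pi^*\Phi)$, using the standard resolution
\[0\to \pi^*E\otimes\pi^*K_C^{-m}\xrightarrow{\lambda-\pi^*\Phi}\pi^*E\to F\to 0.\]
Both $\det(\lambda-\pi^*\Phi_i)$ and the equation of $\tC_i$ are monic of degree $p_i$ in $\lambda$. The first divides a power of the second; comparing degrees generically over $C$ reduces this to an equality of characteristic polynomials over a generic point. That generic equality follows because a $p_i$-dimensional module over $k(C)[\lambda]/(f)$ with $\deg f=p_i$ is supported on $f$. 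If this is too delicate, an alternative is to interpret ``supported on $\tC_i$'' through the Fitting support, in which case equality is immediate from the resolution.

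\emph{Numerical data.} Since $\pi$ is affine, $\chi(F_i)=\chi(C,E_i)=d_i+p_i(1-g)$ by Riemann--Roch, and conversely. This matches the degree vector with the Euler characteristics.

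Morphisms are unchanged throughout, since both categories are full subcategories and the functor is the restriction of the equivalence in Theorem~\ref{thm:equiv4}.
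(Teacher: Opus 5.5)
Your route is the paper's: restrict (1)$\Leftrightarrow$(4) of Theorem~\ref{thm:equiv4}, use $E_i=\pi_*F_i$ with $\Phi_i$ acting as $\lambda$, and reduce slot by slot to BNR/Simpson for the $K_C^m$-twisted pair $(E_i,\Phi_i)$. The paper only cites that correspondence, while you unpack it. Your purity, Cayley--Hamilton and Euler characteristic steps are fine.

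The gap is in the converse of the support step, and it has two parts.

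\emph{The rank is assumed.} You take $\pi_*F_i$ to have rank $p_i$, but (a) read as ``an $\O_{\tC_i}$-module'' does not give this. Take a rank-two torsion-free sheaf on an integral $\tC_i$ of suitable degree; already for $m=1$, where (b) just says $\psi_0=\operatorname{Id}\otimes\lambda$, it satisfies (a). Yet it pushes forward to rank $2p_i$ with spectral curve $2\tC_i$.

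\emph{The characteristic polynomial is not forced.} Even granting rank $p_i$, the divisibility $\det(\lambda-\pi^*\Phi_i)\mid f^n$ between monic polynomials of equal degree does not force equality. The claim that a $p_i$-dimensional $k(C)[\lambda]/(f)$-module has characteristic polynomial $f$ holds when $k(C)[\lambda]/(f)$ is a field, i.e.\ when $\tC_i$ is integral, but fails in general. For example:
\begin{itemize}
\item Let $p_i=2$ and $f=\lambda(\lambda-a)$ with $0\neq a\in H^0(C,K_C^m)$, so $\tC_i=Y_0\cup\Gamma_a$ with $\Gamma_a$ the graph of $a$.
\item Let $F_i=L\oplus L'$ with $L,L'$ line bundles on $Y_0\cong C$. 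For $m=1$, or for general $m$ with all $\tC_j,F_j$ of this shape and all $\psi_j=0$, condition (b) holds because $\lambda$ vanishes on $Y_0$.
\item Then $F_i$ is a pure $\O_{\tC_i}$-module and $\pi_*F_i$ has rank $2$. But $\Phi_i=0$ has characteristic polynomial $\lambda^2$, so its spectral curve is $2Y_0\neq\tC_i$.
\end{itemize}
This is not a marginal case. It is exactly the situation of Remark~\ref{rem:constraints}, where $\tC_i=\tC+q_iY_0$ has $Y_0$ as a component as soon as $q_i>0$.

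So the fallback you mention is not optional. Under the annihilator reading, the converse fails, and with it the corollary as literally phrased. The paper's one-line appeal to BNR/Simpson does not address this: BNR's theorem assumes an integral spectral curve, and in general the spectral curve of $(\pi_*F,\Phi)$ is the Fitting support of $F$, not its annihilator support.

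The repair is to read ``supported on $\tC_i$'' as $\operatorname{Fitt}_0(F_i)=\mathcal{I}_{\tC_i}$. Your resolution $0\to\pi^*E_i\otimes\pi^*K_C^{-m}\to\pi^*E_i\to F_i\to 0$ gives $\operatorname{Fitt}_0(F_i)=(\det(\lambda-\pi^*\Phi_i))$. Hence this condition is equivalent to $\Phi_i$ having spectral curve $\tC_i$. Since the determinant has $\lambda$-degree $\operatorname{rk}E_i$, it also forces $\operatorname{rk}\pi_*F_i=p_i$, which closes the rank hole as well. Alternatively, keep the annihilator reading, add the rank condition, and restrict to integral $\tC_i$; there your generic-point argument is correct.
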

\begin{proof}
    The correspondence between the cyclic Higgs bundles $(\{E_i\}_{i\in \zm}, \{\phi_i\}_{i\in \zm})$ and quiver sheaves $(\{F_i\}_{i\in\zm}, \{\psi_i\}_{i\in\zm})$ is the equivalence (1) $\Leftrightarrow$ (4) in Theorem~\ref{thm:equiv4}. In particular, under this equivalence, we have $E_i =\pi_* F_i$ and the Higgs field $\Phi_i$ is recovered by pushing forward $\psi_{i-1}\circ\cdots \circ \psi_{i} =  \operatorname{Id}_{F_i}\otimes_{\O_Y}\lambda$ to $C$. For each $i$, this reduces precisely to the classical spectral correspondence for $GL(p_i, \C)$-Higgs bundles (\cite{bnr, simpsonmoduli}). Thus, the condition that $E_i$ is locally free of rank $p_i$ and degree $d_i$ with spectral curve $\tC_i$ translates directly to $F_i$ being pure dimension one with support $\tC_i$ and Euler characteristic $d_i+p_i(1-g)$.  
    
\end{proof}

\begin{rem}[Constraints on spectral curves]\label{rem:constraints}
Suppose $(\tC_0, \dots,\tC_{m-1})\in B(\bm{p})$ is a fixed collection of spectral curves in $Y$ that comes from a cyclic Higgs bundle. Then the equivalence in Corollary~\ref{cor:equiv5} imposes a constraint on the spectral curves. Indeed, the relations 
\[(\psi_{i-1}\circ\cdots \circ\psi_{i+1})\circ \psi_{i} = \operatorname{Id}_{F_i}\otimes_{\O_Y} \lambda,\quad \psi_{i}\circ (\psi_{i-1}\circ\cdots \circ \psi_{i+1})= \operatorname{Id}_{F_{i+1}}\otimes_{\O_Y} \lambda \]
implies that $\psi_{i}:F_i\otimes \pi^*K_C^{-1}|_{Y\setminus Y_0}\to F_{i+1}|_{Y\setminus Y_0} $ is an isomorphism since $\operatorname{Id}_{F_i}\otimes_{\O_Y} \lambda $ is an isomorphism on $Y\setminus Y_0$. Moreover, taking the tensor product with the line bundle $\pi^{*}K_C^{-1}$ does not change the support of $F_i$, so $F_i$ and $F_{i+1}$ share the same support away from $Y_0$. In particular, there exists a common curve $\tC\subset Y$ such that $(\tC_0, \dots,\tC_{m-1})\in B(\bm{p})$ is of the form 
\begin{equation}\label{eq:spectral curve}
    (\tC + q_0 Y_0 , \dots, \tC + q_{m-1} Y_0) ,\quad \textrm{ where }q_0,\dots, q_{m-1} \in \Z_{\geq 0}.
\end{equation}

\end{rem}

\section{Examples}\label{sec:examples}
\subsection{$GL(r, \C)$-Higgs bundles}
For $m=1$, $Z(A(1))\cong A(1)$ and $\wA(1) \cong \O_Y$. In this case the noncommutative surface $(Y, \wA(1))= (Y, \O_Y)$ is just the standard commutative ruled surface $Y$. The correspondence (Corollary~\ref{cor:equiv5}) reduces to the classical result of BNR \cite{bnr} and Simpson \cite{simpsonmoduli}. 

\subsection{$U(p_0,p_1)$-Higgs bundles and even Clifford algebras}\label{sec:clifford}
    Consider $m=2$ and $\bm{p}= (p_0,p_1)$. In this case, we will show that the noncommutative surface $(Y, \wA(2))$ can be identified with another noncommutative surface derived from a conic fibration over $Y$ with ramification locus $Y_0$. 

    Let us first recall the definition of conic fibration and its associated sheaf of even Clifford algebras. Let $S$ be a smooth variety. 
    Fix a rank $3$ vector bundle $F$ on $S$ and a  quadratic form on $F$ which is an embedding of a line bundle $q: L \to S^2F^{\vee}$. Let $f':\P(F):= \operatorname{Proj}(S^\bullet F^{\vee})\to S$ be the projection map. Since the quadratic form $q$ is equivalent to a section 
    \[s_q\in H^0(S, S^2 F^{\vee}\otimes L^{-1}) = H^0(\P(F), \O_{\P(F)/S}(2)\otimes f'^*L^{-1}),\]
    the zero locus $\m X$ of $s_q$ defines a conic fibration $f=f'|_{\m X}: \m X\to S$ whose fibers are plane conics. The discriminant locus of $f$ is the locus where the rank of the quadratic form drops. 
    
    It is known by the work of Kuznetsov \cite{kuznetsov-quadric} that the bounded derived category $D^b(\m X)$ of the conic fibration $\m X$ over $S$ admits a semiorthogonal decomposition 
    \[ D^b(\m X) = \langle D^b(S, \mathcal{C}\ell_0(q)), f^*D^b(S) \rangle\]
    where the first component is the bounded derived category of right modules over the sheaf of even Clifford algebra $\mathcal{C}\ell_0(q)$. Just as a quadratic form on a vector space defines the Clifford algebra which decomposes into the even and odd parts, the sheaf of algebras $\mathcal{C}\ell_0(q)$ is obtained as a relative version of this construction: it is the sheaf of $\O_S$-algebras whose fiber at any point $x\in S$ is the even part of the Clifford algebra defined by the quadratic form $q_x$ on the fiber $F_x$ (see the original paper of Kuznetsov \cite{kuznetsov-quadric} for more details). 

    The following proposition identifies the sheaf of algebras $\wA(2)$ as the sheaf of even Clifford algebras of a conic fibration over $Y$. 

    \begin{prop}\label{prop:clifford}
        There exists a rank 3 vector bundle $F$ over $Y$ and a quadratic form $q$ on $F$ with discriminant locus $Y_0$ such that the associated sheaf of even Clifford algebras $\mathcal{C}\ell_0(q)$ is isomorphic to $\wA(2)$. 
    \end{prop}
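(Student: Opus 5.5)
The plan is to write down an explicit quadratic form of the shape ``hyperbolic plane $\perp\ \lambda\cdot(\text{square})$'' and identify its even Clifford algebra with $\wA(2)$ by matching generators and relations. We first recall from Proposition~\ref{prop:finiterank} and the definition of $I$ what $\wA(2)$ looks like. As an $\O_Y$-module,
\[ \wA(2)=\O_Y e_0\oplus \O_Y e_1\oplus \pi^*K_C^{-1}\, a_0\oplus \pi^*K_C^{-1}\, a_1, \]
where $e_0,e_1$ are orthogonal idempotents with $e_0+e_1=1$, $a_0\in e_1\wA(2)e_0$ and $a_1\in e_0\wA(2)e_1$. The only relations are $a_1a_0=\lambda e_0$ and $a_0a_1=\lambda e_1$, read with the identification $\pi^*K_C^{-2}\xrightarrow{\lambda}\O_Y$. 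So it suffices to produce $(F,q)$ whose $\mathcal{C}\ell_0(q)$ has the same presentation.

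For a rank $3$ bundle $F$ with quadratic form valued in a line bundle $N$, $\mathcal{C}\ell_0(q)\cong \O_S\oplus \Lambda^2F\otimes N^{-1}$ as $\O_S$-modules. Take $N=\pi^*K_C^2$ and
\[ F=\pi^*K_C\oplus \pi^*K_C\oplus \O_Y, \qquad q(x,y,z)=xy-\lambda z^2 , \]
where $xy$ uses the multiplication $\pi^*K_C\otimes\pi^*K_C\to N$, and $\lambda z^2\in N$ uses $\lambda\in H^0(Y,\pi^*K_C^2)=H^0(Y,N)$. (One adjusts to the dual convention $L\to S^2F^\vee$ of the paper by dualizing/twisting; this is bookkeeping.)

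Then $\Lambda^2F\otimes N^{-1}=\O_Y\oplus \pi^*K_C^{-1}\oplus\pi^*K_C^{-1}$, spanned locally by $f_1f_2$, $f_1f_3$ and $f_2f_3$, where $f_1,f_2,f_3$ are local generators of the three summands. This matches the rank-one pieces of $\wA(2)$ and their twists. The determinant of $q$ is $\lambda$ up to a unit, so the discriminant locus is exactly $Y_0$, where the form drops to rank $2$.

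Next we define the map $\wA(2)\to\mathcal{C}\ell_0(q)$ locally by
\[ e_0\mapsto f_1f_2,\qquad e_1\mapsto f_2f_1,\qquad a_1\mapsto f_1f_3,\qquad a_0\mapsto \pm f_2f_3 . \]
The Clifford relations are $f_1^2=f_2^2=0$, $f_1f_2+f_2f_1=1$, $f_3^2=-\lambda$, and $f_3$ anticommutes with $f_1,f_2$. From these one checks the following:
\begin{itemize}
\item[(i)] $e_0,e_1$ go to orthogonal idempotents summing to $1$;
\item[(ii)] the image of $a_1$ satisfies $f_1f_2\cdot f_1f_3=f_1f_3=f_1f_3\cdot f_2f_1$, so it lies in the correct Peirce component, and similarly for $a_0$;
\item[(iii)] $(f_1f_3)(f_2f_3)=-f_1f_2f_3^2=\lambda f_1f_2$ and likewise $(f_2f_3)(f_1f_3)=\lambda f_2f_1$, giving the loop relations of $I$, with the sign fixed by the choice of $\pm$.
\end{itemize}
The universal property of $A_Y(2)$ as a tensor algebra (Section 2) gives an algebra map $A_Y(2)\to\mathcal{C}\ell_0(q)$ that kills $I$, hence a map $\wA(2)\to \mathcal{C}\ell_0(q)$. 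It sends the four summands isomorphically onto the four summands of $\mathcal{C}\ell_0(q)$, so it is an isomorphism of locally free sheaves of rank $4$.

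The main obstacle is making the local computation global and canonical. The generators $f_i$ are only local, so one must check that the assignments are the canonical maps $\pi^*K_C^{-1}\cong \pi^*K_C\otimes N^{-1}$, and that signs and the choice of which summand corresponds to $a_0$ versus $a_1$ are compatible with the transition functions of $\pi^*K_C$. I would handle this by writing the map intrinsically: $e_0\mapsto$ the image of the canonical pairing element in $\pi^*K_C\otimes\pi^*K_C\otimes N^{-1}=\O_Y$, and $a_i\mapsto$ the identity maps $\pi^*K_C\otimes\O_Y\otimes N^{-1}\cong\pi^*K_C^{-1}$. Then independence of trivializations is automatic, and the relations only need to be verified in one local frame as above.
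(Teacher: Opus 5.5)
Your proof is correct, and it takes a genuinely different route from the paper's.

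\textbf{The paper's route.} The paper never writes down a quadratic form. It splits $\wA(2)=\O_Y(e_0+e_1)\oplus\ker(\operatorname{Tr})$ using a trace map. It then checks fiberwise that $\wA(2)|_y$ is an even Clifford algebra compatibly with this splitting: for $y\in Y_0$ it cites Lee's computation, and off $Y_0$ it uses the matrix-algebra description of Proposition~\ref{prop:finiterank}(2). Finally it invokes Kuznetsov's criterion that a pointwise Clifford algebra is $\mathcal{C}\ell_0$ of some quadratic form, with $F=(\wA(2)^0)^\vee$ produced abstractly.

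\textbf{Your route.} You carry out the explicit construction that the paper mentions and sets aside. You take $q=xy-\lambda z^2$ on $\pi^*K_C^{\oplus 2}\oplus\O_Y$ with values in $\pi^*K_C^{2}$, and build an algebra map out of the tensor algebra $A_Y(2)$ that kills $I$. Your local computations hold: the idempotents, the Peirce components, and the products $(f_1f_3)(f_2f_3)=\lambda f_1f_2$ and $(f_2f_3)(f_1f_3)=\lambda f_2f_1$. Your intrinsic, frame-independent description of the map also checks out under change of trivialization. Two small clarifications:
\begin{itemize}
\item The sign is simply $+$.
\item The \emph{four summands} of $\mathcal{C}\ell_0(q)$ should be read as $\O_Y f_1f_2\oplus\O_Y f_2f_1\oplus\pi^*K_C^{-1}f_1f_3\oplus\pi^*K_C^{-1}f_2f_3$ rather than with $\O_Y\cdot 1$. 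This is harmless, since $f_1f_2+f_2f_1=1$.
\end{itemize}
Your $F$ also agrees with the paper's, since $\Lambda^2F\otimes N^{-1}\cong\O_Y\oplus\pi^*K_C^{-1}\oplus\pi^*K_C^{-1}\cong\wA(2)^0$.

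\textbf{What each route buys.} Your argument has several advantages:
\begin{itemize}
\item It is self-contained, with no appeal to Kuznetsov's Proposition~2.7 or to Lee's fiber computation.
\item The discriminant claim becomes a one-line determinant computation: $\det q\propto\lambda$, so the discriminant divisor is $Y_0$ with multiplicity one.
\item It exhibits the conic fibration concretely as $\{xy=\lambda z^2\}$.
\item Each rank-one piece of $\wA(2)$ maps isomorphically onto a direct summand of the rank-$4$ bundle $\mathcal{C}\ell_0(q)$. So your argument reproves the rank-$4$ local freeness of Proposition~\ref{prop:finiterank}(1) for $m=2$ rather than depending on it.
\end{itemize}
The paper's route avoids all twist-and-sign bookkeeping and gives a conceptual, fiberwise reason for the identification.
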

    \begin{proof}
        It is possible to explicitly write down the quadratic form and construct an isomorphism between the two sheaves of algebras. Instead, we will apply a useful criterion of Kuznetsov which essentially reduces to checking that $\wA(2)$ is an even Clifford algebra over each point $y\in Y$. The idea is to show that $\wA(2)$ is a pointwise Clifford algebra in the sense of Kuznetsov  which implies the existence of the desired quadratic form by \cite[Proposition 2.7]{kuznetsov2025spinormodificationsconicbundles}. We will focus on checking the required condition of pointwise Clifford algebra and refer the readers to \cite[Definition 2.5]{kuznetsov2025spinormodificationsconicbundles} for a precise definition. 

        By the analysis in Proposition~\ref{prop:finiterank}, $\wA(2)$ admits a direct sum decomposition according to the length of paths. More specifically, we have $\wA(2) \cong \O_Ye_{0}\oplus \O_Y e_{1}\oplus M_{0\leftarrow 1} \oplus M_{1\leftarrow 0}$ as $\O_Y$-modules where $M_{1\leftarrow 0}$ and $M_{0\leftarrow 1}$ are both isomorphic to $\pi^*K_C^{-1}$. We can define an $\O_Y$-linear trace map via:
        \[ \operatorname{Tr}: \wA(2)\to \O_Y, \quad \a= \a_0 e_0+ \a_1e_1+ \a_{0\leftarrow 1} + \a_{1\leftarrow 0} \mapsto \a_0+\a_1 \]
        By the definition of $\wA(2)$, the sections corresponding to loops are identified with $\O_Y$-scalar multiplication by the tautological section $\lambda$. So, for any $\alpha_{0\leftarrow 1}\in M_{0\leftarrow 1}$ and $\alpha'_{1\leftarrow 0}\in M_{1\leftarrow 0}$, their products yield $\alpha_{0\leftarrow 1}\alpha'_{1\leftarrow 0} = \lambda e_0$ and  $\alpha'_{1\leftarrow 0}\alpha_{0\leftarrow 1} = \lambda e_1$. The trace of their commutator 
        \[\operatorname{Tr}([\alpha_{0\leftarrow 1},\alpha'_{1\leftarrow 0}]) = \operatorname{Tr}(\lambda e_0 - \lambda e_1) = \lambda - \lambda =0 .\]
        This implies that the trace map also vanishes on $[\wA(2), \wA(2)]$. 
        
        Moreover, $\operatorname{Tr}$ is clearly surjective and the section $\frac{1}{2}(e_0+e_1): O_Y\to \wA(2)$ provides a splitting. Hence, we have a direct sum decomposition
        \begin{equation}\label{eq:decomposition}
            \wA(2) = \O_Y(e_0+e_1)\oplus \wA(2)^0
        \end{equation}
        where $\wA(2)^0= \ker (\operatorname{Tr})$ contains the commutator subsheaf $[\wA(2), \wA(2)]$. This verifies the first half of the definition of a pointwise Clifford algebra. 

        For any point $y\in Y_0$, the fiber $\wA(2)|_y$ is a path algebra of the quiver $Q(2)$ with relations. It is observed in \cite[Proposition 4.1]{lee-clifford} that $\wA(2)|_y$ is isomorphic to the even Clifford algebra associated to a degenerate quadratic form. Moreover, the explicit isomorphism provided in \cite[Eq. (19)]{lee-clifford} identifies the direct sum decomposition \ref{eq:decomposition} with the natural decomposition of the even Clifford algebra. 

        For any point $y\in Y\setminus Y_0$, the fiber $\wA(2)|_y$ is a matrix algebra by Proposition~\ref{prop:finiterank}(2) which is the even Clifford algebra associated to a nondegenerate quadratic form (e.g. see \cite[Remark 2.4]{kuznetsov2025spinormodificationsconicbundles}). Moreover, the trace map on a matrix algebra is unique, and so is the direct sum decomposition \ref{eq:decomposition}. Hence, the isomorphism between $\wA(2)|_y$ and $\operatorname{Mat}_{2\times 2}(\C)$ identifies the the decompositions. 

        Therefore, $\wA(2)$ is a pointwise Clifford algebra and the result follows from applying \cite[Proposition 2.7]{kuznetsov2025spinormodificationsconicbundles}. 
        Moreover, by its construction, the vector bundle $F$ is given by $(\wA(2)^0)^{\vee}$. 
    \end{proof}

        From the perspective of even Clifford algebras, there is a further central reduction of the sheaf of algebras $\wA(2)\cong \mathcal{C}\ell_0(q)$ into a sheaf of Azumaya algebras $\widehat{\mathcal{C}\ell_0(q)}$ on the $2$-nd root stack $\widehat{Y}$ of $Y$ along $Y_0$ \cite[Section 3.6]{kuznetsov-quadric} (see \cite[Section 3.1-3.2]{lee-clifford} for a summary). In particular, we have 
        \begin{equation}\label{eq:azumaya}
            \operatorname{Coh}(\widehat{Y}, \widehat{\mathcal{C}\ell_0(q)})\cong \operatorname{Coh}(Y, \mathcal{C}\ell_0(q))
        \end{equation}
        Note that there is a natural morphism $S^\bullet K_C^{-2}\to S^{\bullet} K_C^{-1}$ of $\O_C$-algebras which induces a (2-sheeted) cyclic cover $\eta: \tY = \underline{\operatorname{Spec}}(S^{\bullet} K_C^{-1})\to\underline{\operatorname{Spec}}(S^{\bullet} K_C^{-2})= Y$ with a $\Z/2\Z$-action and ramified at $Y_0$. So, $\widehat{Y}$ is the quotient stack $\left[\tY/ (\Z/2\Z)\right]$. If we restrict the equivalence \ref{eq:azumaya} to a smooth curve $\tC\subset Y$ which intersects $Y_0$ transversally (which happens for the case in Section~\ref{sec:equal dim}), then the sheaf of Azumaya algebras splits \cite[Corollary 3.16]{kuznetsov-quadric} and the equivalence~\ref{eq:azumaya} becomes 
        \[     \operatorname{Coh}\left(\left[\Sigma/ (\Z/2\Z)\right]\right)\cong \operatorname{Coh}(\tC, \mathcal{C}\ell_0(q)|_{\tC}) \]
        where $\Sigma\subset \tilde{Y}$ is the double cover of $\tC$. When $\tC\subset Y$ is a smooth spectral curve for cyclic Higgs bundles of dimension vector $(p,p)$, the double cover $\Sigma\subset \widetilde{Y}$ is exactly the spectral curve of the associated $GL(2p,\C)$-Higgs bundle. Hence, the equivalence implies that cyclic Higgs bundles of dimension vector $(p,p)$ with spectral curve $\tC$ corresponds to $\Z/2\Z$-equivariant line bundles on $\Sigma$. This description is consistent with the characterization of $U(p,p)$-Higgs bundles as fixed points of an involution (see \cite[Proposition 3]{schaposnik14}).

    \begin{rem}
        On a related note, we remark that conic fibrations also arise in the context of $PGL(r,\C)$-Hitchin systems. By the work of Diaconescu--Donagi--Pantev \cite{DDP} (see also \cite[Section 8.8]{kontsevich-soibelman} for the affine conic fibration description), 
        there is a family of (Calabi-Yau) affine conic fibrations whose intermediate Jacobians are isomorphic to the Hitchin fibers of $PGL(r,\C)$-Higgs bundles. In this case, the discriminant locus of the affine conic fibration is the corresponding spectral curve.

    \end{rem}

\subsection{Spectral data for cyclic Higgs bundles of dimension vector $(p,\dots, p)$}\label{sec:equal dim}
Consider $\bm{p} = (p,\dots, p)$ and fix $(\tC_0,\dots, \tC_{m-1})\in B(\bm{p})$. By Remark~\ref{rem:constraints}, the degree of $\tC_i$ over $C$ is $p$ and so $q_0= \dots = q_{m-1}=0$ in \ref{eq:spectral curve}. This means that the associated spectral curves $\tC_0= \dots= \tC_{m-1}=\tC$  coincide. 

Suppose the common spectral curve $\tC$ is smooth and not equal to $Y_0$. The spectral data $(\{F_i\}_{i\in\zm}, \{\psi_i\}_{i\in\zm})$ in Corollary~\ref{cor:equiv5} can be further simplified as follows. For $i\in \zm$, the pure dimension one sheaf $F_i$ on $Y$ is a line bundle $L_i$ on $\tC$. Note that
\[\psi_i\in \operatorname{Hom}_{\tC}(L_i\otimes\pi^*K_C^{-1}, L_{i+1})\cong H^0(\tC,L_i^{-1}\otimes L_{i+1}\otimes  \pi^*K_C).\]
Since $\psi_i$ is an isomorphism away from $Y_0$, we see that $\psi_i$ is a non-zero section of $L_i^{-1}\otimes L_{i+1}\otimes  \pi^*K_C$ and determines an effective divisor $D_i = \operatorname{div}(\psi_i)$. Conversely, given $L_i$ and an effective divisor $D_i$, we can recover $L_{i+1}$ as $\O(D_i)\otimes L_i\otimes \pi^*K_C^{-1}$ and $\psi_i$ as the unique section that vanishes on $D_i$. The relation $\psi_{i-1}\circ\cdots \circ \psi_{i} = \operatorname{Id}_{F_i}\otimes_{\O_Y} \lambda $ translates into the relation that 
\begin{equation}\label{eq:relation}
    D_0+\dots +D_{m-1}= \operatorname{div}(\lambda|_{\tC})
\end{equation}
By starting with a line bundle $L_0$ on $\tC$ and running the argument above, we conclude that the spectral data $(\{F_i\}_{i\in\zm}, \{\psi_i\}_{i\in\zm})$ in Corollary~\ref{cor:equiv5} is equivalent to the data of $(L_0, D_0,\dots, D_{m-1})$ where $L_0$ is a line bundle on $\tC$ and each $D_i$ is an effective divisor on $\tC$ satisfying the relation \ref{eq:relation}.

For $\bm{p}= (p,p)$, the description above recovers the spectral data for $U(p,p)$-Higgs bundles studied in the work of Schaposnik \cite{schaposnik14}.

\section{Future Directions}\label{sec:future}
\subsection{Spectral data for cyclic Higgs bundles of arbitrary dimension vector $(p_0,\dots,p_{m-1}$) }

The primary reason that the spectral data in the case of $(p,\dots, p)$ admits a simplification to $(L_0, D_0, \dots, D_{m-1})$ is that the associated spectral curves $\tC_0=\dots = \tC_{m-1}$ coincide. However, as soon as one of the components in the dimension vector is different from the others, at least one of the spectral curves will be reducible. 

For example, consider $\bm{p} = (p+1,\dots, p)$ and fix $(\tC_0,\dots, \tC_{m-1})\in B(\bm{p})$. As noted in Remark~\ref{rem:constraints}, we must have $q_0=1$ and $q_i=0$ for $i\neq 0$ in \ref{eq:spectral curve}. This implies that the associated spectral curves are given by $\tC_0 = \tC \cup Y_0$ and $\tC_1= \dots= \tC_{m-1}=\tC$, where $\tC$ is a degree $p$ cover over $C$. Then it is possible to reduce the spectral data in Corollary~\ref{cor:equiv5} in terms of spectral data on $\tC$ for dimension vector $(p,\dots, p)$, a line bundle on $Y_0\cong C$ and a gluing data (such as \cite[Corollary 5.10]{peónnieto2015cameraldatasup1phiggsbundles}). However, the complexity of this kind of analysis grows as the difference between the components in the dimension vector increases. 

\subsection{Moduli spaces of cyclic Higgs bundles}
As mentioned in the introduction, the moduli space of cyclic Higgs bundles can be viewed as the fixed point locus of an $\zm$-action on the moduli space of $GL(r,\C)$-Higgs bundle. By the spectral corresondence established in Corollary~\ref{cor:equiv5} and Proposition~\ref{prop:equiv3}, we can interpret this as the moduli space of pure dimension one sheaves on the noncommutative surface $(Y,\wAm)$. One can argue the existence of its coarse moduli space by defining the appropriate stability conditions and show that the sheaf $\wAm$ of algebras is a sheaf of rings of differential operators in the sense of Simpson \cite{Simpson94moduli1}. Then the general theory of Simpson guarantees the existence of moduli space of semistable $\wAm$-modules on $Y$. 

A potential application of this reinterpretation is an algebraic approach to study their connected components. Recall from Section~\ref{sec:clifford} that $\wAm$ is isomorphic to the sheaf of even Clifford algebras of a conic fibration on $Y$. It is proven in the work of Lahoz--Macr\`{\i}--Stellari \cite[Theorem 2.12]{LahozMacriStellari} that the moduli space of modules over even Clifford algebras which arises from a cubic threefold is irreducible and hence connected. Their proof is a variant of the classical argument of Mukai that originally establishes the connectedness of the moduli space of sheaves on a K3 surface. Therefore, we expect a variant of Mukai's argument can be used to prove the connectedness of the moduli space of $\wAm$-modules on $Y$ (after imposing a suitable numerical invariant such as the Toledo invariant). This would provide an algebraic approach that complements the Morse-theoretic approach of Bradlow--Garc\'{i}a-Prada--Gothen \cite{bradlow-gp-gothen} to study the connected components of the moduli space of $U(p,q)$-Higgs bundles. 

\subsection{Generalized Hitchin fibration for quiver bundles}
From the quiver theoretic viewpoint, cyclic Higgs bundles represent the quiver bundles of a specific quiver $Q(m)$. It would be interesting to study the spectral correspondence for quiver bundles of other types of quivers. A quick observation is that the reason that the case of the quiver $Q(m)$ works well is that the center of the associated path algebra is exactly $S^\bullet K_C^{-m}$ (Proposition~\ref{central2}). It is known from the general theory of quiver that a quiver without oriented loop has trivial center. So, the natural first step to analyze  the case for other quivers will be to look at quivers with oriented loops and then study the center subalgebra of their path algebras. 

From the moduli viewpoint, the spectral correspondence for quiver bundles is equivalent to the study of the fibers of the corresponding generalized Hitchin fibration (originally defined by Schmitt in \cite{schmitt}). To give a brief description of the generalized Hitchin fibration, it is more convenient to use the language of stacks. 

Let $Q=(Q_0, Q_1)$ be a general quiver with head and tail maps $h,t:Q_1\to Q_0$. For a fixed dimension vector $\bm{p}=(p_i)_{i\in Q_0}$, we associate to each vertex $i\in Q_0$ a complex vector space $V_i$ of dimension $p_i$. The space of representations of $Q$ with dimension vector $\bm{p}$ is the affine space 
\[ \operatorname{Rep}(Q,\bm{p})= \bigoplus_{a\in Q_1} \operatorname{Hom}(V_{t(a)}, V_{h(a)}). \]
The change of basis group is defined as the product
\[ G_{\bm{p}}:= \prod_{i\in Q_0}GL(V_i)\]
which acts naturally on $\operatorname{Rep}(Q,\bm{p})$ by conjugation. Given this action, there is the associated morphism from the quotient stack to the GIT quotient: 
\[\chi: \left[ \operatorname{Rep}(Q, \bm{p})/G_{\bm{p}} \right] \to \operatorname{Rep}(Q, \bm{p})/\!/G_{\bm{p}} \] 
Just as the original Hitchin fibration for $G$-Higgs bundles is induced by $\left[\mathfrak{g}/G\right] \to \mathfrak{g}/\!/G$, by following the general construction as in \cite{ngo2006fibration,ngo2010lemme}, the morphism $\chi$ induces a generalized Hitchin fibration from the moduli stack of quiver bundles to an affine Hitchin base. 

In fact, Ng\^{o} has recently initiated a general framework to study the generalized Hitchin fibration induced from the action of a reductive group on an affine normal scheme (see the related work \cite{hameister}). The action of $G_{\bm{p}}$ on $\operatorname{Rep}(Q,\bm{p})$ can be seen as an example of this general framework. His approach is closer to the original cameral cover approach that works well over the regular locus. Our emphasis here is on the quiver structure and a description over the whole Hitchin base. It will be interesting to study the relation between the different approaches.

\printbibliography

@article {schaposnik14,
    AUTHOR = {Schaposnik, Laura P.},
     TITLE = {Spectral data for {$U(m,m)$}-{H}iggs bundles},
   JOURNAL = {Int. Math. Res. Not. IMRN},
  FJOURNAL = {International Mathematics Research Notices. IMRN},
      YEAR = {2015},
    NUMBER = {11},
     PAGES = {3486--3498},
      ISSN = {1073-7928,1687-0247},
   MRCLASS = {14J60 (14D20)},
  MRNUMBER = {3373057},
MRREVIEWER = {Sanjay\ Kumar\ Singh},
}

@article{CGP,
      author ={\'Alvarez-C\'onsul, L. and Garc\'ia-Prada, O.} ,
      title = {Hitchin--Kobayashi Correspondence, Quivers,
and Vortices},
number={238},
pages={1--33},
      journal = {Commun. Math. Phys. },
      year = {2003},
DOI={Commun. Math. Phys. 238, 1–33 (2003}
  }

@misc{kuznetsov2025spinormodificationsconicbundles,
      title={Spinor modifications of conic bundles and derived categories of 1-nodal Fano threefolds}, 
      author={Alexander Kuznetsov},
      year={2025},
      eprint={2502.02082},
      archivePrefix={arXiv},
      primaryClass={math.AG},
      url={https://arxiv.org/abs/2502.02082}, 
}

@article {LahozMacriStellari,
    AUTHOR = {Lahoz, Mart\'i{} and Macr\`i, Emanuele and Stellari, Paolo},
     TITLE = {Arithmetically {C}ohen-{M}acaulay bundles on cubic threefolds},
   JOURNAL = {Algebr. Geom.},
  FJOURNAL = {Algebraic Geometry},
    VOLUME = {2},
      YEAR = {2015},
    NUMBER = {2},
     PAGES = {231--269},
      ISSN = {2313-1691,2214-2584},
   MRCLASS = {14F05 (14E05 14J60)},
  MRNUMBER = {3350158},
MRREVIEWER = {Cristian\ V.\ Anghel},
       DOI = {10.14231/AG-2015-011},
       URL = {https://doi.org/10.14231/AG-2015-011},
}

@article {Simpson-middleconvolution,
    AUTHOR = {Simpson, Carlos},
     TITLE = {Katz's middle convolution algorithm},
   JOURNAL = {Pure Appl. Math. Q.},
  FJOURNAL = {Pure and Applied Mathematics Quarterly},
    VOLUME = {5},
      YEAR = {2009},
    NUMBER = {2},
     PAGES = {781--852},
      ISSN = {1558-8599,1558-8602},
   MRCLASS = {14F20 (14D05 20G20 30F30)},
  MRNUMBER = {2508903},
MRREVIEWER = {Vladimir\ P.\ Kostov},
       DOI = {10.4310/PAMQ.2009.v5.n2.a8},
       URL = {https://doi.org/10.4310/PAMQ.2009.v5.n2.a8},
}

@article {kuznetsov-hyperplane,
    AUTHOR = {Kuznetsov, A. G.},
     TITLE = {Hyperplane sections and derived categories},
   JOURNAL = {Izv. Ross. Akad. Nauk Ser. Mat.},
  FJOURNAL = {Izvestiya Rossiiskoi Akademii Nauk. Seriya Matematicheskaya},
    VOLUME = {70},
      YEAR = {2006},
    NUMBER = {3},
     PAGES = {23--128},
      ISSN = {1607-0046,2587-5906},
   MRCLASS = {14F05 (18E30)},
  MRNUMBER = {2238172},
MRREVIEWER = {Andrei\ D.\ Halanay},
       DOI = {10.1070/IM2006v070n03ABEH002318},
       URL = {https://doi.org/10.1070/IM2006v070n03ABEH002318},
}

@article {kuznetsov-quadric,
    AUTHOR = {Kuznetsov, Alexander},
     TITLE = {Derived categories of quadric fibrations and intersections of
              quadrics},
   JOURNAL = {Adv. Math.},
  FJOURNAL = {Advances in Mathematics},
    VOLUME = {218},
      YEAR = {2008},
    NUMBER = {5},
     PAGES = {1340--1369},
      ISSN = {0001-8708,1090-2082},
   MRCLASS = {14F05},
  MRNUMBER = {2419925},
MRREVIEWER = {Adrian\ Langer},
       DOI = {10.1016/j.aim.2008.03.007},
       URL = {https://doi.org/10.1016/j.aim.2008.03.007},
}

@misc{stacks-project,
  author       = {The {Stacks project authors}},
  title        = {The Stacks project},
  howpublished = {\url{https://stacks.math.columbia.edu}},
  year         = {2025},
}

@article {bnr,
    AUTHOR = {Beauville, Arnaud and Narasimhan, M. S. and Ramanan, S.},
     TITLE = {Spectral curves and the generalised theta divisor},
   JOURNAL = {J. Reine Angew. Math.},
  FJOURNAL = {Journal f\"ur die Reine und Angewandte Mathematik. [Crelle's
              Journal]},
    VOLUME = {398},
      YEAR = {1989},
     PAGES = {169--179},
      ISSN = {0075-4102,1435-5345},
   MRCLASS = {14H60 (14D20 14H42)},
  MRNUMBER = {998478},
MRREVIEWER = {P.\ E.\ Newstead},
       DOI = {10.1515/crll.1989.398.169},
       URL = {https://doi.org/10.1515/crll.1989.398.169},
}

@article {simpsonmoduli,
    AUTHOR = {Simpson, Carlos T.},
     TITLE = {Moduli of representations of the fundamental group of a smooth
              projective variety. {II}},
   JOURNAL = {Inst. Hautes \'Etudes Sci. Publ. Math.},
  FJOURNAL = {Institut des Hautes \'Etudes Scientifiques. Publications
              Math\'ematiques},
    NUMBER = {80},
      YEAR = {1994},
     PAGES = {5--79},
      ISSN = {0073-8301,1618-1913},
   MRCLASS = {14D20 (14D22 14F05 14F10)},
  MRNUMBER = {1320603},
MRREVIEWER = {Nitin\ Nitsure},
       URL = {http://www.numdam.org/item?id=PMIHES_1994__80__5_0},
}

@article {lee-clifford,
    AUTHOR = {Lee, Jia Choon},
     TITLE = {Moduli spaces of modules over even {C}lifford algebras and
              {P}rym varieties},
   JOURNAL = {Math. Z.},
  FJOURNAL = {Mathematische Zeitschrift},
    VOLUME = {304},
      YEAR = {2023},
    NUMBER = {3},
     PAGES = {Paper No. 53, 27},
      ISSN = {0025-5874,1432-1823},
   MRCLASS = {14D20 (14F08 14H40)},
  MRNUMBER = {4611792},
MRREVIEWER = {Wolfgang\ Rump},
       DOI = {10.1007/s00209-023-03310-w},
       URL = {https://doi.org/10.1007/s00209-023-03310-w},
}

@article {DDP,
    AUTHOR = {Diaconescu, D. E. and Donagi, R. and Pantev, T.},
     TITLE = {Intermediate {J}acobians and {$ADE$} {H}itchin systems},
   JOURNAL = {Math. Res. Lett.},
  FJOURNAL = {Mathematical Research Letters},
    VOLUME = {14},
      YEAR = {2007},
    NUMBER = {5},
     PAGES = {745--756},
      ISSN = {1073-2780},
   MRCLASS = {14J81 (14J32 14K30 32G81 37J35 81T30)},
  MRNUMBER = {2350120},
MRREVIEWER = {Justin\ Sawon},
       DOI = {10.4310/MRL.2007.v14.n5.a3},
       URL = {https://doi.org/10.4310/MRL.2007.v14.n5.a3},
}

@incollection {kontsevich-soibelman,
    AUTHOR = {Kontsevich, Maxim and Soibelman, Yan},
     TITLE = {Wall-crossing structures in {D}onaldson-{T}homas invariants,
              integrable systems and mirror symmetry},
 BOOKTITLE = {Homological mirror symmetry and tropical geometry},
    SERIES = {Lect. Notes Unione Mat. Ital.},
    VOLUME = {15},
     PAGES = {197--308},
 PUBLISHER = {Springer, Cham},
      YEAR = {2014},
      ISBN = {978-3-319-06513-7; 978-3-319-06514-4},
   MRCLASS = {14N35 (14J33 53D37)},
  MRNUMBER = {3330788},
MRREVIEWER = {Victor\ Przyjalkowski},
       DOI = {10.1007/978-3-319-06514-4\_6},
       URL = {https://doi.org/10.1007/978-3-319-06514-4_6},
}

@article {Li-Mochizuki2,
    AUTHOR = {Li, Qiongling and Mochizuki, Takuro},
     TITLE = {Complete solutions of {T}oda equations and cyclic {H}iggs
              bundles over non-compact surfaces},
   JOURNAL = {Int. Math. Res. Not. IMRN},
  FJOURNAL = {International Mathematics Research Notices. IMRN},
      YEAR = {2025},
    NUMBER = {7},
     PAGES = {Paper No. rnaf081, 38},
      ISSN = {1073-7928,1687-0247},
   MRCLASS = {32W50 (14H60 30F99 37K10)},
  MRNUMBER = {4888699},
       DOI = {10.1093/imrn/rnaf081},
       URL = {https://doi.org/10.1093/imrn/rnaf081},
}

@article {baraglia,
    AUTHOR = {Baraglia, David},
     TITLE = {Cyclic {H}iggs bundles and the affine {T}oda equations},
   JOURNAL = {Geom. Dedicata},
  FJOURNAL = {Geometriae Dedicata},
    VOLUME = {174},
      YEAR = {2015},
     PAGES = {25--42},
      ISSN = {0046-5755,1572-9168},
   MRCLASS = {53C07 (17B80 53C43)},
  MRNUMBER = {3303039},
       DOI = {10.1007/s10711-014-0003-2},
       URL = {https://doi.org/10.1007/s10711-014-0003-2},
}

@article {hitchin-stable,
    AUTHOR = {Hitchin, Nigel},
     TITLE = {Stable bundles and integrable systems},
   JOURNAL = {Duke Math. J.},
  FJOURNAL = {Duke Mathematical Journal},
    VOLUME = {54},
      YEAR = {1987},
    NUMBER = {1},
     PAGES = {91--114},
      ISSN = {0012-7094,1547-7398},
   MRCLASS = {58F07 (14F05 32G13 32L05 32L10)},
  MRNUMBER = {885778},
MRREVIEWER = {G.\ M.\ Khenkin},
       DOI = {10.1215/S0012-7094-87-05408-1},
       URL = {https://doi.org/10.1215/S0012-7094-87-05408-1},
}

@article {bradlow-gp-gothen,
    AUTHOR = {Bradlow, Steven B. and Garc\'ia-Prada, Oscar and Gothen, Peter
              B.},
     TITLE = {Surface group representations and {${\rm U}(p,q)$}-{H}iggs
              bundles},
   JOURNAL = {J. Differential Geom.},
  FJOURNAL = {Journal of Differential Geometry},
    VOLUME = {64},
      YEAR = {2003},
    NUMBER = {1},
     PAGES = {111--170},
      ISSN = {0022-040X,1945-743X},
   MRCLASS = {53D30 (57R19)},
  MRNUMBER = {2015045},
MRREVIEWER = {Ignasi\ Mundet-Riera},
       URL = {http://projecteuclid.org/euclid.jdg/1090426889},
}

@article {gp-peon-nieto,
    AUTHOR = {Garc\'ia-Prada, Oscar and Pe\'on-Nieto, Ana},
     TITLE = {Abelianization of {H}iggs bundles for quasi-split real groups},
   JOURNAL = {Transform. Groups},
  FJOURNAL = {Transformation Groups},
    VOLUME = {28},
      YEAR = {2023},
    NUMBER = {1},
     PAGES = {285--325},
      ISSN = {1083-4362,1531-586X},
   MRCLASS = {14H60 (14D23)},
  MRNUMBER = {4552177},
MRREVIEWER = {Yusuf\ Mustopa},
       DOI = {10.1007/s00031-021-09658-9},
       URL = {https://doi.org/10.1007/s00031-021-09658-9},
}

@incollection {gp-vinberg,
    AUTHOR = {Garc\'ia-Prada, Oscar},
     TITLE = {Vinberg pairs and {H}iggs bundles},
 BOOKTITLE = {Moduli spaces and vector bundles---new trends},
    SERIES = {Contemp. Math.},
    VOLUME = {803},
     PAGES = {199--222},
 PUBLISHER = {Amer. Math. Soc., [Providence], RI},
      YEAR = {[2024] \copyright 2024},
      ISBN = {978-1-4704-7296-2; [9781470476465]},
   MRCLASS = {14H60 (57R57 58D29)},
  MRNUMBER = {4773904},
       DOI = {10.1090/conm/803/16099},
       URL = {https://doi.org/10.1090/conm/803/16099},
}

@article{Garc_a_Prada_2026,
   title={Cyclic Higgs Bundles and the Toledo Invariant},
   ISSN={1531-586X},
   url={http://dx.doi.org/10.1007/s00031-026-09962-2},
   DOI={10.1007/s00031-026-09962-2},
   journal={Transformation Groups},
   publisher={Springer Science and Business Media LLC},
   author={García-Prada, Oscar and González, Miguel},
   year={2026},
   month=mar }

@misc{peónnieto2015cameraldatasup1phiggsbundles,
      title={Cameral data for SU(p+1,p)-Higgs bundles}, 
      author={Ana Peón-Nieto},
      year={2015},
      eprint={1506.01318},
      archivePrefix={arXiv},
      primaryClass={math.AG},
      url={https://arxiv.org/abs/1506.01318}, 
}

@article {Simpson94moduli1,
    AUTHOR = {Simpson, Carlos T.},
     TITLE = {Moduli of representations of the fundamental group of a smooth
              projective variety. {I}},
   JOURNAL = {Inst. Hautes \'{E}tudes Sci. Publ. Math.},
  FJOURNAL = {Institut des Hautes \'{E}tudes Scientifiques. Publications
              Math\'{e}matiques},
    NUMBER = {79},
      YEAR = {1994},
     PAGES = {47--129},
      ISSN = {0073-8301},
   MRCLASS = {14D20 (14D22 14D25 14F05)},
  MRNUMBER = {1307297},
MRREVIEWER = {Nitin Nitsure},
       URL = {http://www.numdam.org/item?id=PMIHES_1994__79__47_0},
}

@article {schmitt,
    AUTHOR = {Schmitt, Alexander},
     TITLE = {Moduli for decorated tuples of sheaves and representation
              spaces for quivers},
   JOURNAL = {Proc. Indian Acad. Sci. Math. Sci.},
  FJOURNAL = {Indian Academy of Sciences. Proceedings. Mathematical
              Sciences},
    VOLUME = {115},
      YEAR = {2005},
    NUMBER = {1},
     PAGES = {15--49},
      ISSN = {0253-4142,0973-7685},
   MRCLASS = {14D20 (14F05 16G20)},
  MRNUMBER = {2120597},
MRREVIEWER = {Alastair\ Craw},
       DOI = {10.1007/BF02829837},
       URL = {https://doi.org/10.1007/BF02829837},
}

@article{ngo2010lemme,
  title={Le lemme fondamental pour les algebres de Lie},
  author={Ng{\^o}, Bao Ch{\^a}u},
  journal={Publications Math{\'e}matiques de l'IH{\'E}S},
  volume={111},
  number={1},
  pages={1--169},
  year={2010},
  publisher={Springer}
}

@article{ngo2006fibration,
  title={Fibration de Hitchin et endoscopie},
  author={Ng{\^o}, Bao Ch{\^a}u},
  journal={Inventiones mathematicae},
  volume={164},
  number={2},
  pages={399--453},
  year={2006},
  publisher={Springer}
}

@article {hameister,
    AUTHOR = {Hameister, Thomas and Morrissey, Benedict},
     TITLE = {The {H}itchin fibration for symmetric pairs},
   JOURNAL = {Adv. Math.},
  FJOURNAL = {Advances in Mathematics},
    VOLUME = {482},
      YEAR = {2025},
     PAGES = {Paper No. 110560, 68},
      ISSN = {0001-8708,1090-2082},
   MRCLASS = {14D06 (14D23 14D24 14L24 17B45)},
  MRNUMBER = {4966334},
       DOI = {10.1016/j.aim.2025.110560},
       URL = {https://doi.org/10.1016/j.aim.2025.110560},
}

@misc{lee2025relativespectralcorrespondenceparabolic,
      title={Relative spectral correspondence for parabolic Higgs bundles and Deligne--Simpson problem}, 
      author={Jia Choon Lee and Sukjoo Lee},
      year={2025},
      eprint={2509.08527},
      archivePrefix={arXiv},
      primaryClass={math.AG},
      url={https://arxiv.org/abs/2509.08527}, 
}

@article{Diaconescu_2018,
	doi = {10.1007/s00220-018-3097-9},
  
	url = {https://doi.org/10.1007%2Fs00220-018-3097-9},
  
	year = 2018,
	month = {02},
  
	publisher = {Springer Science and Business Media {LLC}
},
  
	volume = {359},
  
	number = {3},
  
	pages = {1027--1078},
  
	author = {Duiliu-Emanuel Diaconescu and Ron Donagi and Tony Pantev},
  
	title = {{BPS} States, Torus Links and Wild Character Varieties},
  
	journal = {Communications in Mathematical Physics}
}

@article{Szab2017TheBG,
  title={The birational geometry of unramified irregular Higgs bundles on curves},
  author={Szil{\'a}rd Szab{\'o}},
  journal={International Journal of Mathematics},
  year={2017},
  volume={28},
  pages={1750045},
  url={https://api.semanticscholar.org/CorpusID:125343111}
}

@article {chuang,
    AUTHOR = {Chuang, Wu-yen and Diaconescu, Duiliu-Emanuel and Donagi, Ron
              and Pantev, Tony},
     TITLE = {Parabolic refined invariants and {M}acdonald polynomials},
   JOURNAL = {Comm. Math. Phys.},
  FJOURNAL = {Communications in Mathematical Physics},
    VOLUME = {335},
      YEAR = {2015},
    NUMBER = {3},
     PAGES = {1323--1379},
      ISSN = {0010-3616,1432-0916},
   MRCLASS = {81T30 (05E05 14N35)},
  MRNUMBER = {3320315},
       DOI = {10.1007/s00220-014-2184-9},
       URL = {https://doi.org/10.1007/s00220-014-2184-9},
}

@unpublished{KOP,
  author = {Katzarkov, Ludmil and Orlov, Dmitri and Pantev, Tony},
  title  = {Notes on Higgs bundles and D-branes},
  note   = {Unpublished notes},
}

@article {garcia-ramanan,
    AUTHOR = {Garc\'ia-Prada, Oscar and Ramanan, S.},
     TITLE = {Involutions and higher order automorphisms of {H}iggs bundle
              moduli spaces},
   JOURNAL = {Proc. Lond. Math. Soc. (3)},
  FJOURNAL = {Proceedings of the London Mathematical Society. Third Series},
    VOLUME = {119},
      YEAR = {2019},
    NUMBER = {3},
     PAGES = {681--732},
      ISSN = {0024-6115,1460-244X},
   MRCLASS = {14H60 (57R57 58D29)},
  MRNUMBER = {3960666},
MRREVIEWER = {Ronald\ A.\ Z\'u\~niga-Rojas},
       DOI = {10.1112/plms.12242},
       URL = {https://doi.org/10.1112/plms.12242},
}

\end{document}